\newcommand{\palpha}{{\overline \alpha}} 
\newcommand{\M}{{\mathrm {M}}}
\renewcommand{\u}{{\mathrm {u}}}
\newcommand{\Map}{{\mathrm {Map}}}
\newcommand{\Witt}{{\mathrm {W}}}
\newcommand{\Aut}{{\mathrm {Aut}}}
\newcommand{\End}{{\mathrm {End}}}
\newcommand{\Hom}{{\mathrm {Hom}}}
\newcommand{\Gl}{{\mathrm {GL}}}
\newcommand{\GL}{{\mathrm {GL}}}
 \newcommand{\PGL}{{\mathrm{PGL}}}
\newcommand{\Def}{{\mathrm {Def}}}
\newcommand{\Z}{{\mathbb {Z}}}
\newcommand{\F}{{\mathbb {F}}}
\newcommand{\gm}{{\mathfrak{m}}}
\theoremstyle{plain}
\newtheorem{thm}{Theorem}[section]
\newtheorem{lemma}[thm]{Lemma}
\newtheorem{question}[thm]{Question}
\newtheorem{hypo}[thm]{Hypothesis}
\theoremstyle{definition}
\theoremstyle{remark}
\newtheorem{rem}[thm]{Remark}
\newcommand{\semid}{\unitlength.47cm
 \begin{picture}(.7,.6)
  \put(0,.05){$\times$}
  \put(.47,.04){\line(0,1){.39}}
 \end{picture}}
\begin{document}

\title{Inverse Problems for deformation rings}

\date{}
\author{Frauke M. Bleher}
\address{F.B.: Department of Mathematics\\University of Iowa\\
Iowa City, IA 52242-1419, U.S.A.}
\email{frauke-bleher@uiowa.edu}
\author{Ted Chinburg}
\address{T.C.: Department of Mathematics\\University of
Pennsylvania\\Phi\-la\-delphia, PA
19104-6395, U.S.A.}
\email{ted@math.upenn.edu}
\author{Bart de Smit}
\address{B.deS:  Mathematisch Instituut\\University of Leiden\\P.O. Box 9512
\\ 2300 RA Leiden\\
The Netherlands}
\email{desmit@math.leidenuniv.nl}

\thanks{The first author was supported in part by  
NSF Grant  DMS0651332 and NSA Grant H98230-11-1-0131.
The second author was supported in part by  NSF Grants DMS0801030
and DMS1100355.
The third author was funded in part by the European Commission under contract
MRTN-CT-2006-035495.}
\subjclass[2000]{Primary 11F80; Secondary 11R32, 20C20,11R29}
\keywords{Universal deformation rings; complete intersections; inverse problems}

\begin{abstract}
Let $W$ be a complete Noetherian local commutative ring with
residue field $k$ of positive characteristic $p$.
We study the inverse problem for the universal deformation rings $R_{W}(\Gamma,V)$ 
relative to $W$ of finite dimensional representations $V$
of a profinite group $\Gamma$ over $k$.
We show that for all $p$ and $n \ge 1$, the
ring  $W[[t]]/(p^n t,t^2)$ arises as a universal deformation ring. 
This ring is not a complete intersection if $p^nW\neq\{0\}$, so we obtain an answer to a question of M. Flach
in all characteristics. We also study the `inverse inverse problem' for the ring $W[[t]]/(p^n t,t^2)$; this is to determine
all pairs $(\Gamma, V)$ such that $R_{W}(\Gamma,V)$ is isomorphic to this ring.
\end{abstract}

\maketitle

\section{Introduction}
\label{s:intro}

Let $W$ be a complete Noetherian local commutative ring with
residue field $k$ of positive characteristic $p$. 
Suppose $\Gamma$ is a profinite group and that $V$ is a continuous finite
dimensional representation of $\Gamma$ over $k$.
Here the topology on $V$ is discrete, so the image of the 
continuous homomorphism $\Gamma \to \Aut_k(V)$ is finite.
In \S \ref{s:basechange} we recall the definition of a deformation of $V$ over
a complete Noetherian local commutative $W$-algebra with residue
field $k$.
Under a mild hypothesis on the representation ($\End_{k\Gamma}(V)=k$)
and on the profinite group $\Gamma$ (Hypothesis \ref{hyp:finite} below), 
there is a unique universal deformation over the so-called
universal deformation ring $R_W(\Gamma, V)$. We will recall its basic properties in the next section.

In this paper we consider the following inverse problem:

\begin{question}
\label{q:inverse}
Which complete noetherian local $W$-algebras $R$ with residue field $k$ are 
isomorphic to $R_{W}(\Gamma,V)$ for some $\Gamma$ and $V$ as above? 
\end{question}

It is important to emphasize that in this question $W$ is fixed, but $\Gamma$ and $V$ are not fixed. 
Thus for a given $W$-algebra $R$, one would like to construct both a profinite group
$\Gamma$ and a continuous finite dimensional representation $V$ of $\Gamma$
over $k$ for which $R_{W}(\Gamma,V)$ is isomorphic to $R$.

One can also consider the following ``inverse inverse'' problem:

\begin{question}
\label{q:inverseinverse}
Suppose $R$ is a complete Noetherian local ${W}$-algebra with residue
field $k$. What are all pairs $(\Gamma, V)$ as above such that
$R\cong R_{W}(\Gamma,V)$?
\end{question}

The goal of this paper is to answer Questions \ref{q:inverse} and \ref{q:inverseinverse}
for the rings $R=W[[t]]/(p^n t,t^2)$, where $n$ is a positive integer.
More precisely, we prove the following main
results.

\begin{thm}
\label{thm:main1}
For every $n \ge 1$ there is a representation $V$ of a finite group $\Gamma$
over $k$ such that $R_{W}(\Gamma,V)$ is isomorphic to $$W[[t]]/(p^n t,t^2)$$
as a $W$-algebra.
If $p^nW\ne 0$ then this ring is not a complete intersection.
\end{thm}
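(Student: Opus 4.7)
The plan is to prove Theorem \ref{thm:main1} in two logically independent parts: construct a finite group $\Gamma$ and a representation $V$ of $\Gamma$ over $k$ whose universal deformation ring equals $R := W[[t]]/(p^n t, t^2)$, and separately verify by a ring-theoretic argument that $R$ is not a complete intersection over $W$ when $p^n W \ne 0$.

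For the construction, the basic constraints on $(\Gamma, V)$ follow from general deformation theory. Since $R \otimes_W k \cong k[t]/(t^2)$, the relative cotangent space $\gm_R/(\gm_R^2 + \gm_W R)$ is one-dimensional over $k$, spanned by the class of $t$. This forces $\dim_k H^1(\Gamma, \End_k(V)) = 1$, where $\End_k(V)$ carries the adjoint action, and one also needs $\End_{k\Gamma}(V) = k$ together with Hypothesis \ref{hyp:finite}. I expect the candidate $(\Gamma, V)$ to be a finite group of the form $P \rtimes H$, with $P$ a $p$-group large enough to carry the $W/p^n$-torsion in the deformation parameter and $H$ of order prime to $p$, together with a representation $V$ (possibly induced from a character of $P$) whose adjoint cohomology is pinned down to a single class by the action of $H$. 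The required deformation to $R$ would then be exhibited by writing down an explicit matrix-valued lift $\rho_R:\Gamma \to \Aut_R(V\otimes_k R)$ of $\overline\rho$ in which $t$ appears in a single controlled entry, thereby producing a surjection $R_W(\Gamma, V) \twoheadrightarrow R$.

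The hard part is proving this surjection is an isomorphism: one must show that the relations $t^2 = 0$ and $p^n t = 0$ are forced on every deformation, with no additional relations needed. The vanishing $t^2 = 0$ should come from a nonzero cup-product obstruction in $H^2(\Gamma, \End_k(V))$ blocking a lift to the next infinitesimal order, while $p^n t = 0$ should arise from an integrality obstruction: $\overline\rho$ lifts to $W/p^{n+m}[[t]]/(t^2)$ only through the quotient that kills $p^n t$. I expect these obstruction classes to be computed explicitly from the chosen $(\Gamma, V)$ using finite-group cohomology of $\End_k(V)$, combined with an analysis of how conjugation in $\Aut_W(V \otimes_k W)$ interacts with the $p^n$-torsion; the tight control of higher-order obstructions is the step I anticipate will require the most care.

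For the non-complete-intersection claim, assume $p^n W \ne 0$. Since the cotangent space of $R$ is one-dimensional over $k$, any presentation of $R$ as a $W$-algebra uses at least one variable, so $R \cong W[[t]]/J$ for some ideal $J$; moreover $W \hookrightarrow R$ and $R$ is finite over $W$, giving $\dim R = \dim W$, so $R$ is a complete intersection over $W$ precisely when $J$ is principal. Suppose $J = (f)$. Since $J \subset (t)$ and $t$ is a non-zero-divisor in $W[[t]]$, we have $f = t g$ for some $g$, and the containment $t^2 \in (tg)$ forces $g$ to divide $t$, so $g$ is either a unit or a unit multiple of $t$, giving $(f) = (t)$ or $(f) = (t^2)$. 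In the first case $t \in (p^n t, t^2)$ would yield $1 = p^n a + tb$ for some $a, b \in W[[t]]$, and comparing constant terms would give $1 = p^n a(0) \in \gm_W$, a contradiction. In the second case $p^n t \in (t^2)$ would give $p^n = tc$ for some $c$, and comparing constant terms would give $p^n = 0$ in $W$, contradicting $p^n W \ne 0$. Hence $J$ cannot be principal and $R$ is not a complete intersection.
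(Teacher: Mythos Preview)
Your construction plan is on the right track in outline---semidirect product, one-dimensional $H^1$, obstructions to lifting---but remains a sketch with no specific group in hand. The paper carries this out with a concrete, computable choice: over $k=\mathbb{F}_p$ and $W=\mathbb{Z}_p$ it takes $G=\mathbb{F}_{p^2}^{\,*}\rtimes G_0$ with $G_0=\mathrm{Gal}(\mathbb{F}_{p^2}/\mathbb{F}_p)$, $V=\mathbb{F}_{p^2}$ (a simple projective $\mathbb{F}_pG$-module), identifies $M=\End_{\mathbb{F}_p}(V)$ with the twisted group ring $\mathbb{F}_{p^2}^{\,\dagger}G_0$, reads off a simple projective summand $V'\subset M$, lets $K$ be a projective $(\Z/p^n)G$-lift of $V'$, and sets $\Gamma=K\rtimes G$. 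The relations $t^2=0$ and $p^nt=0$ are then forced not by an abstract cup product but by a direct matrix computation inside a universal lift: writing $\rho^{\u}(g)=1+tA_g$ for $g\in K$, commutativity of $K$ gives $t^2(A_gA_h-A_hA_g)=0$ and $p^n$-torsion of $K$ gives $(1+tA_g)^{p^n}=1$, while the non-commutativity of the image of $K$ in $M$ (checked explicitly in the twisted group ring) makes some entry of $A_gA_h-A_hA_g$ a unit. Base change (Theorem~\ref{thm:basechange}) then passes from $(\mathbb{Z}_p,\mathbb{F}_p)$ to arbitrary $(W,k)$.

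Your non-complete-intersection argument has a genuine gap when $W$ is not regular. You correctly prove that $J=(p^nt,t^2)$ is not principal in $W[[t]]$, but the inference ``$R$ is a complete intersection $\Longleftrightarrow$ $J$ is principal'' is only valid when $W[[t]]$ is itself regular: the absolute complete intersection property is tested via a presentation by a \emph{regular} local ring, which $W[[t]]$ need not be. (As a cautionary example of the failure of your biconditional, take $A=k[[x,y,z]]/(xy,xz)$ and $J=(x,y)$: then $A/J=k[[z]]$ is regular, hence a complete intersection, yet $J$ is not principal in $A$.) The paper handles general $W$ by choosing a surjection $S\twoheadrightarrow W$ from a regular complete local ring with kernel $I$, so that $R=S[[t]]/I'$ with $I'=(I,p^nt,t^2)$ and $S[[t]]$ regular; one then computes $\dim_k(I'/\gm_{S[[t]]}I')=\dim_k(I/\gm_SI)+2$ whenever $p^nW\ne 0$, whereas the complete intersection condition would force this to equal $\dim S[[t]]-\dim R=\dim S+1-\dim W\le \dim_k(I/\gm_SI)+1$. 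Your argument is exactly the special case $S=W$, $I=0$ of this count.
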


Note that $W[[t]]/(p^nt,t^2)$ is not even a Cohen-Macaulay ring in general.

\begin{thm}
\label{thm:main2}
Suppose that $k$ is perfect and let $\Witt(k)$ be the ring of infinite 
$p$-Witt vectors over~$k$.
Then there exists a complete classification, given in Theorem \ref{thm:genresult}, of
all profinite groups $\Gamma$ and all continuous finite dimensional 
representations $V$ of $\Gamma$ over $k$ such that the following conditions hold:
\begin{enumerate}
\item $\End_{k\Gamma}(V)=k$ and Hypothesis \ref{hyp:finite} holds;
\item $V$ is projective as a module over
$kG$, where $G$ denotes the image of $\Gamma$ in $\Aut_k(V)$;
\item the universal deformation of $V$ is faithful as a representation of $\Gamma$;
\item the universal deformation ring $R_{W}(\Gamma,V)$ is isomorphic to 
$$\Witt(k)[[t]]/(p^nt,t^2).$$
\end{enumerate}
\end{thm}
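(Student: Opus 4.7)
The plan is to derive the classification of Theorem~\ref{thm:genresult} by translating conditions (1)--(4) into group-theoretic and cohomological constraints on the pair $(\Gamma,V)$.

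\emph{Reduction to $N$.} Let $G$ be the image of $\Gamma$ in $\Aut_k(V)$ and $N=\ker(\Gamma\to G)$. Since $V$ is projective as a $kG$-module and $\End_{kG}(V)=k$, the $kG$-module $\End_k V\cong V\otimes_k V^*$ is again projective; hence $H^i(G,\End_k V)=0$ for all $i\ge 1$, and in particular $R_\Witt(G,V)=\Witt(k)$. Thus the nontrivial deformation content of $V$ is carried entirely by $N$. Because $N$ acts trivially on $\End_k V$, inflation--restriction applied to $1\to N\to\Gamma\to G\to 1$ gives
\[
H^1(\Gamma,\End_k V)\;\cong\;\Hom_{\mathrm{cts}}(N,\End_k V)^G,
\]
and, since the surrounding terms on $G$ also vanish, $H^2(\Gamma,\End_k V)\cong H^2(N,\End_k V)^G$.

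\emph{Constraints from the shape of $R$.} For $R=\Witt(k)[[t]]/(p^n t,t^2)$ one computes $\dim_k\gm_R/(\gm_R^2+pR)=1$, so condition~(4) forces the $\Hom$-group above to be one-dimensional over $k$. This sharply restricts the $kG$-module $\bar N:=N/[N,N]N^p\otimes_{\F_p}k$ to admit a \emph{unique} $G$-equivariant map into $\End_k V$ up to scalars. The relation $p^n t=0$ is detected by the Bockstein on the tangent class and bounds the $p$-power exponent of $N^{\mathrm{ab}}$ by $p^n$, while the relation $t^2=0$ must arise as a nontrivial obstruction given by the cup product $H^1\otimes H^1\to H^2$, thereby constraining the commutator structure of $N$ and its interaction with the $G$-action.

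\emph{Faithfulness and recovery of $\Gamma$.} By~(3), $\Gamma$ embeds into $\GL_{\dim V}(R)$, so $N$ injects into the kernel of reduction $\GL_{\dim V}(R)\to\GL_{\dim V}(k)$, which is pro-$p$; in particular $N$ is pro-$p$. Combined with the previous paragraph this yields an explicit description of $N$ as a $G$-group, and then the extension class of $1\to N\to\Gamma\to G\to 1$ in $H^2(G,N)$ is pinned down by the requirement that the $G$-action on $V$ actually extend to $\Gamma$ in a faithful manner. The resulting enumeration of admissible data $(G,V,N,\text{extension class})$ is the classification of Theorem~\ref{thm:genresult}.

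\emph{Main obstacle.} The hardest step is the converse direction: given a candidate triple satisfying the group-theoretic description, one must show that $R_\Witt(\Gamma,V)$ is \emph{exactly} $\Witt(k)[[t]]/(p^n t,t^2)$, and not a proper quotient (e.g.\ $\Witt(k)[[t]]/(p^m t,t^2)$ for some $m<n$) nor a strictly larger ring. This requires both tangent- and obstruction-space computations to be sharp, verifying that precisely the relations $p^n t$ and $t^2$ (and no others) arise. Carrying this out uniformly in $n$ and across all admissible $(G,V)$, while also showing the list is exhaustive modulo the natural equivalence of deformation data, is the technical heart of the classification.
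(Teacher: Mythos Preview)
Your cohomological setup is sound and matches the paper: the inflation--restriction sequence and the vanishing of $H^i(G,\End_k V)$ for $i\ge 1$ are exactly the content of the paper's Lemma~\ref{lem:cohom}, and the tangent-space computation identifying condition~(ii)(a) with the one-dimensionality of $\Hom^{\mathrm{cont}}(K,M)^G$ is Lemma~\ref{lem:tangent}. So the preliminary reduction is fine.

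The genuine gap is that you never arrive at the actual classifying condition, and your proposed mechanism for producing it would not get there. The classification in Theorem~\ref{thm:genresult} is not in terms of a Bockstein bound on the exponent of $N^{\mathrm{ab}}$, a cup-product obstruction, or an extension class in $H^2(G,N)$. It is the concrete requirement that there exist an \emph{injective} continuous $G$-equivariant map $\alpha\colon K\to M_W/p^nM_W$ whose image fails to commute modulo $p$ (with a separate clause when $p=2$, $n=1$). Your outline contains nothing that would single out this condition. In particular: the extension class of $1\to K\to\Gamma\to G\to 1$ plays no role in the classification, since the obstruction to filling in the diagram with the map $\rho_R$ lies in $H^2(G,M_W/p^nM_W)=0$, so every extension works; and your Bockstein/cup-product heuristic does not explain why \emph{non-commutativity of $\alpha(K)$ inside the ring $M$} is the relevant invariant.

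The paper's argument is far more direct than an obstruction-theoretic one. For (ii)$\Rightarrow$(i), one uses $\alpha$ to build an explicit lift $\rho_R\colon\Gamma\to\GL_d(R)$, then shows $t^2=0$ and $p^nt=0$ in $R^{\mathrm{u}}$ by bare-hands matrix identities: writing $\rho^{\mathrm{u}}(g)=1+tA$ for $g\in K$, the commutativity of $K$ gives $t^2(AB-BA)=0$, and the non-commutativity hypothesis makes some entry of $AB-BA$ a unit; likewise $(1+tA)^{p^n}=1$ forces $p^nt=0$. For (i)$\Rightarrow$(ii), one assumes the non-commutativity condition fails and explicitly constructs a lift of $\rho_R$ to a strictly larger quotient $R'$ of $W[[t]]$ (using a truncated exponential when $p\neq 2$, and ad hoc formulas otherwise), contradicting universality. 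None of this is captured by the abstract tangent/obstruction framework you sketch, and the special $p=2$, $n=1$ clause in particular has no natural home in a cup-product formulation.
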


There is an extensive literature concerning explicit computations of universal
deformation rings (often with additional deformation conditions).  In
\cite{boe}, B\"ockle gives a survey of recent results on presentations of
deformation rings and of applications of such presentations to arithmetic
geometry.  This includes many example of rings which are known to be
deformation rings.  There is also a discussion in \cite{boe} of how one can
show that deformation rings are complete intersections as well as the relevance
of presentations to arithmetic, e.g. to  Serre's conjectures in the theory of
modular forms and Galois representations.  

The problem of constructing  representations having universal deformation rings
which are not complete intersections was first posed by M. Flach
\cite{flach}.  The first example of a representation of this kind was found by
Bleher and Chinburg when $\mathrm{char}(k) = 2$;  see \cite{lcicomptes,lcann}.
A more elementary argument proving the same result  was given in \cite{JB}.
Theorem \ref{thm:main1} gives an answer to Flach's question for all possible
residue fields of positive characteristic.  

As of this writing we do not know of a complete local commutative Noetherian
ring $R$ with perfect residue field $k$ of positive characteristic which cannot
be realized as a universal deformation ring of the form $R_{\Witt(k)}(\Gamma,V)$
for some profinite $\Gamma$ and some representation $V$ of $\Gamma$ over $k$.  

Theorem \ref{thm:main1} and the formulation of the inverse problem in Question
\ref{q:inverse} first appeared in \cite{bcd}.  In subsequent work on the
inverse problem, Rainone found in \cite{R} some other rings which are universal
deformation rings and not complete intersections;  see Remark
\ref{rem:negative}.

The sections of this paper are as follows.  

In \S \ref{s:basechange} we recall the definitions of deformations and of
versal and universal deformation rings and describe how versal deformation
rings change when extending the residue field $k$ (see Theorem
\ref{thm:basechange}). 
  
In \S \ref{s:computit} we consider arbitrary perfect fields $k$ of
characteristic $p$ and we take $W = \Witt(k)$.  In Theorem \ref{thm:genresult},
which implies Theorem \ref{thm:main2}, we give a sufficient and necessary set
of conditions on a representation $\tilde {V}$ of a finite group $\Gamma$ over
$k$ for the universal deformation ring $R_{\Witt(k)}(\Gamma,\tilde{V})$ to be
isomorphic to $R = \Witt(k)[[t]]/(p^n t,t^2)$.  The proof that these conditions
are sufficient involves first showing that $R_{\Witt(k)}(\Gamma,\tilde{V})$ is
a quotient of $\Witt(k)[[t]]$ by proving that the dimension of the tangent
space of the deformation functor associated to $\tilde{V}$ is one.  We then
construct an explicit lift of $\tilde{V}$ over $R$ and show that this cannot be
lifted further to any small extension ring of $R$ which is a quotient of
$\Witt(k)[[t]]$. 
 
In \S\ref{s:inverse} we prove Theorem \ref{thm:main1}.  We use Theorem
\ref{thm:basechange} to reduce the proof of Theorem \ref{thm:main1}  to the
case in which  $k = \mathbb{F}_p=\mathbb{Z}/p$ and $W  = \Witt(k) =
\mathbb{Z}_p$. In the latter case we provide explicit examples using
Theorem \ref{thm:genresult} and twisted
group algebras of the form $\mathbb{F}_{p^2}^{\, \dagger}G_0$ where 
$G_0=\mathrm{Gal}(\mathbb{F}_{p^2}/\mathbb{F}_p)$.
Some further examples illustrating Theorem \ref{thm:main1} are given in 
\S\ref{s:examples}.
 
\medbreak \noindent {\bf Acknowledgments:} The authors would like to thank M.
Flach for correspondence about his question.  The second author would also like
to thank the University of Leiden for its hospitality during the spring of 2009
and the summer of 2010.

\section{Deformation rings}
\label{s:basechange}

Let $\Gamma$ be a profinite group, and let $k$ be a field of characteristic $p > 0$,
and let $W$ be a complete 
Noetherian local ring with residue field $k$.  
\newcommand\cat{\hat{\mathcal{C}}}
We denote by $\cat$ the category of all 
complete Noetherian local $W$-algebras with residue field $k$.  
Morphisms in $\hat{\mathcal{C}}$ are $W$-algebra homomorphisms --- they are
continuous and they induce the identity map on residue fields.

Let $V$ be a finite dimensional vector space over $k$ that is endowed with a
continuous $k$-linear action of $\Gamma$, i.e., a continuous group homomorphism
from $\Gamma$ to the discrete group $\Aut_k(V)$.
A \emph{lift} of $V$ over a ring $A \in \hat{\mathcal{C}}$ is then a pair $(M,\phi)$
consisting of a finitely generated free $A$-module $M$ on which $\Gamma$ acts
continuously together with a $\Gamma$-equivariant isomorphism $\phi\colon\;k \otimes_A M \to V$ of
$k$-vector spaces.  
We define the set $\mathrm{Def}_V(A)$ of \emph{deformations} of $V$ over $A$ to be the
set of isomorphism classes of lifts of $V$ over $A$.  
If $\alpha:A\to A'$ is a morphism in $\hat{\mathcal{C}}$, then we define the map
$$\mathrm{Def}_V(\alpha)\colon\;\mathrm{Def}_V(A)\to \mathrm{Def}_V(A')\quad\mbox{  by }
\quad[M,\phi]\mapsto [A'\otimes_{A,\alpha}M,\phi_\alpha]$$ where 
$\phi_\alpha$ is the composition 
$$k\otimes_{A'}(A'\otimes_{A,\alpha}M)\cong k\otimes_AM\xrightarrow{\phi} V.$$
With these definitions $\mathrm{Def}_V(-)$ is a functor 
from $\hat{\mathcal{C}}$ to the category of sets.

We can describe $\mathrm{Def}_V(A)$ in terms of matrix groups as follows.
By choosing a $k$-basis of $V$ we can identify $V$ with $k^d$.
The $\Gamma$-action on $V$ is then given by
a continuous homomorphism $\rho:\Gamma \to \mathrm{GL}_d(k)$.
Let $A$ be a ring in $\hat{\mathcal{C}}$ and denote the reduction map
$\mathrm{GL}_d(A)\to \mathrm{GL}_d(k)$ by $\pi_A$.
By a \emph{lift} of $\rho$ over a ring $A$ in $\hat{\mathcal{C}}$ we mean a continuous
homomorphism $\tau:\Gamma\to \mathrm{GL}_d(A)$ such that $\pi_A\circ\tau=\rho$.
Such a lift defines a $\Gamma$-action on $M=A^d$ and with the obvious
isomorphism $\phi:M\otimes_Ak\to V$ such a lift defines a deformation
$[M,\phi]$ of $V$ over $A$.
Two lifts $\tau,\tau':\Gamma\to \mathrm{GL}_d(A)$ of $\rho$ over $A$ give rise
to the same deformation if and only if they are are 
\emph{strictly equivalent},
that is, if one can be brought into the other by conjugation by a matrix in the kernel of $\pi_A$.
In this way the choice of a basis of $V$ gives rise to an identification of
$\mathrm{Def}_V(A)$ with the set $\Def_\rho(A)$ of strict equivalence
classes of lifts of $\rho$ over $A$.

The functor $\mathrm{Def}_V(-)=\Def_\rho(-)$ is representable if there is a ring $R$
in $\hat{\mathcal{C}}$, and a lift $(U,\phi_U)$ of $V$ over $R$
so for all $A$ in $\hat{\mathcal{C}}$ the map
$$
f_A: \mathrm{Hom}_{\hat{\mathcal{C}}}(R,A)\to \mathrm{Def}_V(A) \qquad \alpha \mapsto \mathrm{Def}_V(\alpha)([U,\phi_U])
$$
is bijective. If this is the case then $R$ is said to be the universal
deformation ring of $V$ and we write $R=R_{W}(\Gamma,V)=R_{W}(\Gamma,\rho)$.
The defining property determines $R$ uniquely up to a unique isomorphism.

A slightly weaker notion can be useful if the functor $\mathrm{Def}_V(-)$ is
not representable.  The ring $k[\epsilon]$ of dual numbers with $\epsilon^2 = 0$
has $W$-algebra structure such that the maximal ideal of $W$ annihilates
$k[\epsilon]$.  One says $R$ is a versal deformation ring of $V$ if
the maps $f_A$ are surjective for all $A$, and bijective
for $A=k[\epsilon]$.  These conditions determine $R=R_{W}(\Gamma,V)=R_{W}(\Gamma,\rho)$ uniquely up to
isomorphism, but the isomorphism need not be unique.
By Mazur \cite[Prop. 20.1]{maz2} the functor $\mathrm{Def}_V(-)$ is
continuous, which means that one only needs to check the surjectivity of $f_A$
for Artinian~$A$.

We will suppose from now on that $\Gamma$ satisfies the following
$p$-finiteness condition used by Mazur in \cite[\S 1.1]{maz1}:

\begin{hypo}
\label{hyp:finite} 
For every open subgroup $J$ of finite index in $\Gamma$, there
are only finitely many continuous homomorphisms from $J$ to $\mathbb{Z}/p$.
\end{hypo}

It follows by \cite[\S1.2]{maz1} that for $\Gamma$ satisfying Hypothesis \ref{hyp:finite}, all finite 
dimensional continuous representations $V$ of 
$\Gamma$ over $k$ have a versal deformation ring.  It is shown in  \cite[Prop. 7.1]{desmitlenstra} that
if $\mathrm{End}_{k\Gamma}(V)=k$, then $V$ has a universal deformation ring.

A proof of the following base change result is given in an appendix (see \S\ref{s:appendix}).
For finite extensions of $k$, this was proved by Faltings (see \cite[Ch. 1]{Wiles}).

\begin{thm}
\label{thm:basechange}
Let $\Gamma$, $k$, $W$ and $V$ be as above.
Suppose that we have a local homomorphism $W\to W'$ of
complete Noetherian local rings, and denote the residue field of $W'$
by $k'$.
Then the versal deformation ring $R_{W'}(\Gamma,V\otimes_kk' )$ is
the completion of the local ring $W'\otimes_{W}R_{W}(\Gamma,V)$.
\end{thm}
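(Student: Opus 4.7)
Write $R = R_W(\Gamma,V)$ for the given versal deformation ring and set $R' = \widehat{W' \otimes_W R}$, the completion of $W' \otimes_W R$ at the unique maximal ideal containing the images of $\mathfrak{m}_{W'}$ and $\mathfrak{m}_R$. A presentation $R \cong W[[x_1,\ldots,x_n]]/I$ yields $R' \cong W'[[x_1,\ldots,x_n]]/IW'[[x_1,\ldots,x_n]]$, so $R' \in \hat{\mathcal{C}}'$ has residue field $k'$. Composing the tautological lift $\rho_R\colon \Gamma \to \GL_d(R)$ with the canonical map $R \to R'$ produces a lift $\rho_{R'}\colon \Gamma \to \GL_d(R')$ of $\rho \otimes_k k'$, which is the candidate tautological lift. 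It suffices to show that
\[
f_A\colon \Hom_{\hat{\mathcal{C}}'}(R',A) \longrightarrow \Def_{V \otimes k'}(A), \qquad \alpha \mapsto \alpha_*\rho_{R'},
\]
is surjective for every Artinian $A \in \hat{\mathcal{C}}'$ (enough by continuity) and bijective for $A = k'[\epsilon]$.

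\textbf{Dual numbers.} The relative cotangent space satisfies
\[
\mathfrak{m}_{R'}/(\mathfrak{m}_{R'}^2 + \mathfrak{m}_{W'}R') \;\cong\; \bigl(\mathfrak{m}_R/(\mathfrak{m}_R^2 + \mathfrak{m}_WR)\bigr)\otimes_k k'.
\]
By the versal property of $R$ the right tensor factor is $k$-dual to $H^1(\Gamma,\End_k(V))$, the tangent space of $\Def_V$ at $k[\epsilon]$; since $k'/k$ is flat, the full tensor product is then $k'$-dual to $H^1(\Gamma,\End_{k'}(V \otimes k'))$, the tangent space of $\Def_{V\otimes k'}$ at $k'[\epsilon]$. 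A direct check shows that $f_{k'[\epsilon]}$ realizes this canonical identification, so it is bijective.

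\textbf{Surjectivity.} For Artinian $A \in \hat{\mathcal{C}}'$ I would induct on $\dim_{k'}A$. The base case $A = k'$ is clear. For the step, choose a small extension $0 \to J \to A \to A_0 \to 0$ with $\mathfrak{m}_A J = 0$, so $J$ is a $k'$-vector space, and consider a lift $\rho_A$ over $A$. By induction its reduction $\rho_{A_0}$ is strictly equivalent to $(\alpha_0)_*\rho_{R'}$ for some $\alpha_0\colon R' \to A_0$, so after conjugating we may assume equality. The $W'$-algebra lifts $\alpha\colon R' \to A$ of $\alpha_0$ form a torsor under $\mathrm{Der}_{W'}(R',J)$, while the strict-equivalence classes of lifts of $\rho_{A_0}$ to $A$ form a torsor under $H^1(\Gamma,\End_{k'}(V \otimes k') \otimes_{k'} J)$. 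Using the presentation of $R$ (to handle derivations) and the identification $\End_{k'}(V \otimes k') \cong \End_k(V)\otimes_k k'$ together with flat base change in group cohomology, both torsor groups are canonically the $k'$-linear extensions of their $W$- and $k$-counterparts, and the Kodaira–Spencer-type comparison map between the torsors is the base change of the corresponding map for $R$ over $W$. That map is surjective by the versal property of $R$, hence so is its base change, and we obtain $\alpha\colon R' \to A$ with $\alpha_*\rho_{R'}$ strictly equivalent to $\rho_A$.

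\textbf{Main obstacle.} The bulk of the work is the torsor-matching in the inductive step: identifying $\mathrm{Der}_{W'}(R',J)$ and $H^1(\Gamma,\End_{k'}(V \otimes k') \otimes_{k'} J)$ with base changes of the corresponding $W$- and $k$-spaces, and checking that the Kodaira–Spencer comparison map for $R'/W'$ agrees under these identifications with the base change of the comparison map for $R/W$. The ingredients — the presentation of $R$, cohomological flat base change for the finite quotient $\Gamma \to \Aut_k(V)$, and the characterization of the versal deformation ring through its lifting property — are standard, but bundling them together and tracking strict equivalences carefully is where the substantive content of the proof lies.
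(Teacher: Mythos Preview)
Your approach diverges from the paper's for the surjectivity step. The paper does not induct on small extensions: instead it observes that since $\rho$ and $\rho' = \rho \otimes k'$ have the same image inside $\GL_d(k) \subset \GL_d(k')$, any lift $\tau': \Gamma \to \GL_d(A')$ over an Artinian $A' \in \hat{\mathcal{C}}'$ already factors through $\GL_d(B)$ for an Artinian $W$-subalgebra $B \subset A'$ with residue field $k$ --- namely the $W$-algebra generated by the finitely many entries of $\tau'(g) - t(g)$, where $g$ runs over a finite topological generating set and $t(g) \in \mathrm{Mat}_d(W)$ is a chosen lift of $\rho(g)$. Then $\tau'$ is a lift of $\rho$ over an object of $\hat{\mathcal{C}}$, versality of $R$ gives $\beta: R \to B$, and one extends $W'$-linearly to obtain $R' \to A'$. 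No obstruction theory or induction enters.

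Your inductive argument has a real gap: you assert that the set of $W'$-algebra lifts $\alpha: R' \to A$ of $\alpha_0$ is a torsor under $\mathrm{Der}_{W'}(R', J)$, but you never check that it is nonempty. Since $R'$ is not formally smooth over $W'$, there is a genuine obstruction in $\Hom(I'/\mathfrak{m}_{P'}I', J)$ (for a presentation $R' = P'/I'$ with $P'$ a power-series ring), and surjectivity of the group homomorphism acting on the two torsors says nothing if the source is empty. The phrase ``the comparison map between the torsors is the base change of the corresponding map for $R$ over $W$'' is also not well-posed as stated: the small extension $A \to A_0$ lives in $\hat{\mathcal{C}}'$ and is not the base change of any small extension in $\hat{\mathcal{C}}$. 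What you can legitimately base-change is the group map on tangent spaces (an isomorphism, by versality), but to secure nonemptiness you would in addition have to compare obstruction classes --- show that the obstruction to lifting $\alpha_0$ maps, under an injection coming from the hull property of $R$, to the deformation-theoretic obstruction in $H^2(\Gamma, \End_k(V) \otimes_k J)$, which vanishes because $\rho_A$ exists. Your outline does not mention this, and it is exactly where the versal property beyond the tangent-space level is needed.
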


\section{The inverse inverse problem for $R=\Witt(k)[[t]]/(p^nt,t^2)$}
\label{s:computit} 
  
In this section $k$ denotes a perfect field $k$ of positive characteristic $p$,
and $W=\Witt(k)$ is the ring of infinite $p$-Witt vectors over $k$, which is a
complete discrete valuation ring of characteristic $0$, residue field $k$, and
uniformizer $p$.  

We let $\Gamma$ be a profinite group satisfying Hypothesis \ref{hyp:finite},
and we let $V$ be a finite dimensional continuous representation of
$\Gamma$ over $k$ that satisfies $\End_{k\Gamma}(V)=k$.
By the previous section we then know that universal deformation
ring $R_W(\Gamma, V)$ exists.

We let $K$ be the kernel of the group homomorphism $\Gamma\to \Aut_k(V)$ and
we let $G$ be the image. Then $G$ is a finite group, and we make the
additional assumption that $V$ is projective as a $kG$-module.
The group $\Gamma$ acts by conjugation on $K$, and if $K$ is abelian then
this makes $K$ into a $\Z G$-module.

Since $V$ is a projective $kG$-module there is a
unique deformation $[V_W,\varphi]$ of the $kG$ module $V$ to $W$ such that $V_W$
is projective as a $WG$-module \cite[Prop. 42, \S 14.4]{SerreRep}. Note that $\varphi$ then provides an
identification of $V_W/pV_W=V_W\otimes_Wk$ with $V$. 
Note also that $M_W=\End_W(V_W)$ has the structure of a (possibly non-commutative) 
$W$-algebra by composition of endomorphisms. The natural $G$-action on $M_W$ by
conjugation also makes it into a $WG$ module, and since $V_W$ is a projective
$WG$-module, so is $M_W$.
In the same way the $k$-algebra $M=\End_k(V)=M_W/pM_W$ is a projective
$kG$-module.

\begin{thm}
\label{thm:genresult}    
For all $k,\Gamma,V$ as above, and all $n\ge 1$ the following statements (i) and (ii) are equivalent.
\begin{enumerate}
\item[(i)]
The group $\Gamma$ acts faithfully on the universal deformation of $V$,
and the universal deformation ring $R_W(\Gamma,V)$ is isomorphic to $W[[t]]/(p^nt,t^2)$.
\item[(ii)] The following conditions hold:
\begin{enumerate}
\item[(a)]  the group 
$\mathrm{Hom}^{cont}(K , M)^G$ of continuous $G$-equivariant homomorphisms from $K$ to $M$ has dimension $1$ as a
vector space over $k$;
\item[(b)] there is a continuous injective $G$-equivariant homomorphism 
$$\alpha\colon\;K \to M_W/p^nM_W$$
such that 
\begin{itemize}
\item 
there exist $g,h\in K$ with 
$$\alpha(g)\circ\alpha(h) \not\equiv \alpha(h)\circ\alpha(g) \bmod pM_W/p^nM_W,$$ or
\item $n=1$, $p = 2$ and there is no $a\in k$ such that for all
$g\in K$ we have $\alpha(g)^2= a\, \alpha(g)$.
\end{itemize}
\end{enumerate}
\end{enumerate}
Each of conditions (i) and (ii) implies that the group $\Gamma$ is finite and $K$ is abelian.
\end{thm}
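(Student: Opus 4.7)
\smallskip

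Step 1 (tangent space and (ii.a)). With $R = W[[t]]/(p^n t, t^2)$ and maximal ideal $\gm_R = (p, t)$, one computes $R/(pR + \gm_R^2) = k[t]/(t^2)$, so the mod-$p$ cotangent space is $1$-dimensional over $k$. The standard identification $\Def_V(k[\epsilon]) \cong H^1(\Gamma, M)$ with $M = \End_k(V)$, combined with the Hochschild--Serre spectral sequence (which collapses because $M$ is $kG$-projective, so $H^i(G, M) = 0$ for $i \ge 1$, and $K$ acts trivially on $M$), yields
$$H^1(\Gamma, M) \cong H^1(K, M)^G = \Hom^{cont}(K, M)^G.$$
Thus (ii.a) is equivalent to $R_W(\Gamma, V)$ having a $1$-dimensional tangent space, i.e.\ to being a quotient of $W[[t]]$; this follows from (i) and is a direct consequence of either half of (ii).

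Step 2 ((i) $\Rightarrow$ (ii.b)). Let $\tilde{\rho}: \Gamma \to \GL_d(R)$ be the universal lift. Writing $R = W \oplus (W/p^n W) \cdot t$ and noting that reduction modulo $t$ recovers $\rho_W$, we get $\tilde{\rho}(\kappa) = 1 + t\,\alpha(\kappa)$ for each $\kappa \in K$, with $\alpha(\kappa) \in M_W/p^n M_W$ well defined. The relation $t^2 = 0$ makes $\alpha$ additive, conjugation by lifts of $g \in G$ gives $G$-equivariance, and faithfulness of the universal deformation forces $\alpha$ injective. For the final clause, suppose both the commutator condition and (when $n = 1, p = 2$) the quadratic condition fail for $\alpha$. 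I then construct a small extension $R^+ \twoheadrightarrow R$ in $\hat{\mathcal{C}}$ and explicitly extend $\tilde{\rho}$ to a lift over $R^+$, contradicting the universality of $R$. When the commutator clause fails with $(n,p) \ne (1,2)$, $R^+ = W[[t]]/(p^{n+1}t, t^2)$ works: lifting $\alpha$ to some $\tilde{\alpha} : K \to M_W/p^{n+1} M_W$ extends $\tilde{\rho}$ multiplicatively precisely when the commutator vanishes modulo $pM_W/p^nM_W$. The residual $p = 2$ case uses a different small extension whose obstruction is governed by the quadratic condition.

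Step 3 ((ii) $\Rightarrow$ (i)). Inflation--restriction (again using $H^i(G, M_W/p^n M_W) = 0$ for $i \ge 1$) gives $H^1(\Gamma, M_W/p^n M_W) \cong \Hom^{cont}(K, M_W/p^n M_W)^G$, so $\alpha$ extends to a $1$-cocycle $\beta : \Gamma \to M_W/p^n M_W$. Setting
$$\tilde{\rho}(\gamma) = (1 + t\,\beta(\gamma))\,\rho_W(\gamma) \in \GL_d(R)$$
gives a lift of $V$ (multiplicativity uses the cocycle identity and $t^2 = 0$), hence a surjection $R_W(\Gamma, V) \twoheadrightarrow R$. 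To show this is an isomorphism, I verify that $\tilde{\rho}$ admits no lift to any small extension of $R$ in $\hat{\mathcal{C}}$. Such small extensions are quotients of $W[[t]]$ parametrised by $\mathbb{P}^1(k)$ inside the $2$-dimensional module $(p^n t, t^2)/\gm_{W[[t]]}(p^n t, t^2)$, with extreme representatives $W[[t]]/(p^{n+1}t, t^2)$ and $W[[t]]/(p^n t, pt^2, t^3)$; the commutator clause of (ii.b) obstructs lifts in the first direction, and the quadratic clause (together with a direct rank computation for $(n,p) \ne (1,2)$) obstructs the second, with $k$-linearity of the obstruction map handling the intermediate cases. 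Faithfulness of the universal deformation then follows from injectivity of $\alpha$ combined with faithfulness of $\rho_W$ on $G$.

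Supplementary claim and main obstacle. In either direction, $\alpha$ embeds $K$ as a closed subgroup of the abelian, $p^n$-torsion group $M_W/p^n M_W$, so $K$ is abelian of exponent dividing $p^n$. Applying Hypothesis~\ref{hyp:finite} to the open subgroup $K$ makes $\Hom^{cont}(K, \Z/p)$ finite, hence $K/pK$ is finite; a pro-$p$ abelian group of bounded exponent with finite Frattini quotient is itself finite, so $K$ is finite, and since $G = \Gamma/K$ is finite, so is $\Gamma$. The hardest step is the matching in Step 3 between the two clauses of (ii.b) and the pencil of small extensions of $R$; the residual $p = 2$ case is genuinely subtle because $(1 + tX)^2 = 1 + 2tX + t^2 X^2$ has its first-order coefficient governed by $2 = p$ in that case, which is exactly what produces the alternative quadratic condition.
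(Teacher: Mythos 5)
Your Step 1 matches the paper's Lemmas~\ref{lem:cohom} and \ref{lem:tangent} and is correct, and your closing argument that $K$ is finite abelian is also fine (the paper's version is shorter: continuity of $\alpha$ together with compactness of $K$ and discreteness of $\M_d(W/p^nW)$ give finiteness of $\alpha(K)$, hence of $K$, directly). The problem is in the pairing, in Steps~2 and~3, between the two clauses of (ii)(b) and the two directions of small extension of $R$, and here the proposal contains a concrete error. In Step~2 you assume (ii)(b) fails and, for $(n,p)\neq(1,2)$, claim to produce a lift of $\rho_R$ over $R^{+}=W[[t]]/(p^{n+1}t,t^{2})$; no such lift can exist, regardless of the commutator clause. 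Since $t^{2}=0$ in $R^{+}$, the restriction of any lift to $K$ has the form $\kappa\mapsto 1+t\tilde\alpha(\kappa)$ with $\tilde\alpha\colon K\to M_W/p^{n+1}M_W$ a group homomorphism --- multiplicativity is automatic from $t^{2}=0$ and involves no commutator. Because $\alpha$ is injective, $K$ has exponent dividing $p^{n}$, so $p^{n}\tilde\alpha=0$, forcing $\tilde\alpha(K)\subset pM_W/p^{n+1}M_W$ and hence $\alpha(K)\subset pM_W/p^{n}M_W$. But under hypothesis~(i) the map $\gamma\colon R_W(\Gamma,V)\to R$ is surjective, so some $\alpha(g)\notin pM_W/p^{n}M_W$. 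Thus $R^{+}$ never admits a lift, and you cannot obtain a contradiction by producing one; the commutator condition plays no role for this small extension.

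The extension that actually carries the commutator obstruction is the one with kernel generated by $t^{2}$, namely $R'=W[[t]]/(p^{n}t,pt^{2},t^{3})$ (and, when $p=2$, $n=1$, the twisted variant $W[[t]]/(2t^{2},t^{3},2t+\hat a t^{2})$). There the degree-two term genuinely appears: for $p\neq 2$ the exponential-type formula $\rho'(g)=1+\alpha(g)t+\overline{\alpha}(g)^{2}t^{2}/2$ is a homomorphism precisely because $\overline{\alpha}(K)$ commutes, for $p=2$, $n\ge 2$ one uses an order computation and a cyclic decomposition of $K$, and the quadratic clause of (ii)(b) enters only for $p=2$, $n=1$. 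Your Step~3 swaps these two roles in the same way, attributing the commutator obstruction to the $p^{n}t$ direction and the quadratic obstruction to the $t^{2}$ direction, whereas in fact the $p^{n}t$ direction is always ruled out by the exponent argument above and the $t^{2}$ direction is governed by the commutator (with the quadratic clause needed only in the residual $p=2$, $n=1$ case). Moreover, for the direction (ii)$\Rightarrow$(i) the paper avoids enumerating small extensions altogether: having established $W[[t]]\twoheadrightarrow R^{\mathrm{u}}\twoheadrightarrow R$, it writes $\rho^{\mathrm{u}}(g)=1+tA$ for $g\in K$ and shows directly that $t^{2}(AB-BA)=0$ and $1=(1+tA)^{p^{n}}=1+p^{n}tA$, giving $t^{2}=p^{n}t=0$ in $R^{\mathrm{u}}$ when some entry of $AB-BA$ is a unit, with a Nakayama argument handling $p=2$, $n=1$; that route is shorter and sidesteps the need to analyze the whole $\mathbb{P}^{1}(k)$-pencil of small extensions with a linearity argument for the obstruction map.
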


Note that Theorem \ref{thm:genresult} implies Theorem \ref{thm:main2}. To show
Theorem \ref{thm:main1}, we construct in Section \ref{s:inverse} examples for
which the conditions in Theorem  \ref{thm:genresult}(ii) are satisfied.  The
rest of this Section is devoted to the proof of Theorem \ref{thm:genresult}.
We first establish some basic cohomological results that will be used in the proof.

\begin{lemma} With the assumptions prior to the statement of Theorem \ref{thm:genresult}  the following conditions hold:
\label{lem:cohom}
\begin{enumerate}
\item
$H^i(G,M_W/p^\ell M_W) = 0$ for all $i\ge 1$ and $\ell \ge 0$;
\item
$H^1(\Gamma,M)$ and $\mathrm{Hom}^{cont}(K , M)^G$ are isomorphic vector spaces over $k$;
\item
the restriction map $H^2(\Gamma, M)\to H^2(K,M)$ is injective.
\end{enumerate}
\end{lemma}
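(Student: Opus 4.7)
The plan is to prove the three parts in sequence, with part (3) as the crux.

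For part (1), the hypothesis that $M_W$ is $WG$-projective gives that $M_W/p^\ell M_W$ is projective over $(W/p^\ell W)G$, hence a direct summand of $((W/p^\ell W)G)^n$. As a $\Z G$-module, $(W/p^\ell W)G$ is induced from the trivial subgroup, so by Shapiro's lemma $H^i(G,(W/p^\ell W)G) = 0$ for all $i\ge 1$, and this vanishing passes to summands (the case $\ell=0$ is vacuous). The same argument shows more generally that $H^i(G,P) = 0$ for every projective $kG$-module $P$ and $i\ge 1$, a fact I reuse below.

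For part (2), since $K = \ker(\Gamma\to\Aut_k(V))$ acts trivially on $V$ and hence on $M$, one has $M^K = M$ and $H^1(K,M) = \Hom^{cont}(K,M)$. Combined with $H^1(G,M) = H^2(G,M) = 0$ from (1), the five-term inflation--restriction sequence collapses to
\begin{equation*}
H^1(\Gamma, M)\;\cong\; H^1(K,M)^G\;=\;\Hom^{cont}(K,M)^G.
\end{equation*}

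For part (3) I would invoke the Hochschild--Serre spectral sequence $E_2^{p,q} = H^p(G, H^q(K,M)) \Rightarrow H^{p+q}(\Gamma,M)$. Combined with $H^p(G,M) = 0$ for $p\ge 1$, the filtration on $H^2(\Gamma,M)$ collapses so that the kernel of the edge map $H^2(\Gamma,M) \to H^2(K,M)^G$---equivalently, of the restriction to $H^2(K,M)$---is identified with $E_2^{1,1} = H^1(G, H^1(K,M))$. It then suffices to show that $\Hom^{cont}(K,M) = H^1(K,M)$ is a projective $kG$-module. Hypothesis \ref{hyp:finite} applied to $K$ (open in $\Gamma$ since $G$ is finite) implies that the largest continuous elementary abelian $p$-quotient $L$ of $K$ is finite, so $\Hom^{cont}(K,M) \cong \Hom_{\mathbb{F}_p}(L,M) \cong L^*\otimes_{\mathbb{F}_p} M$ with diagonal $G$-action; as a $kG$-module this is $(L^*\otimes_{\mathbb{F}_p} k)\otimes_k M$ with diagonal action. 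The untwisting isomorphism $kG\otimes_k N \to kG \otimes_k N_{\mathrm{triv}}$, $g\otimes n\mapsto g\otimes g^{-1}n$, shows that the diagonal tensor over $k$ of any $kG$-module with a projective $kG$-module is projective. Since $M$ is $kG$-projective, so is $\Hom^{cont}(K,M)$, whence $H^1(G, \Hom^{cont}(K,M)) = 0$.

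The main obstacle is this last projectivity step: the vanishings from (1) do not on their own control the kernel of $H^2$-restriction, and the key insight is to transfer $kG$-projectivity from $M$ to $\Hom^{cont}(K,M)$ via the diagonal tensor construction.
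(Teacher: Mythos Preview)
Your proof is correct and follows essentially the same route as the paper. Both arguments handle (1) and (2) identically, and for (3) both use the Hochschild--Serre spectral sequence together with the vanishing of the $E_2^{p,0}$ row from (1), reducing the question to showing that $H^1(K,M)=\Hom^{cont}(K,M)$ is cohomologically trivial as a $G$-module. The only cosmetic difference is in this last step: the paper observes that $M$ is a summand of a free $kG$-module, so $\Hom^{cont}(K,M)$ is a summand of $\Hom^{cont}(K,kG)^n$, which (via the tensor identity) is induced from the trivial subgroup; you instead pass explicitly to the finite Frattini quotient $L$ of $K$ (using Hypothesis~\ref{hyp:finite}) and then apply the untwisting isomorphism to $L^*\otimes_k M$. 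These are two phrasings of the same projectivity argument. The paper in fact records the slightly stronger conclusion $H^2(\Gamma,M)\cong H^2(K,M)^G$, but only the injectivity is needed and your computation of the kernel as $E_\infty^{1,1}=E_2^{1,1}$ is accurate.
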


\begin{proof} 
To see $(1)$, note that $M_W/p^\ell M_W$ is a projective module over the group ring
$(W/p^\ell W) G$, so it is cohomologically trivial as a $\Z G$-module.

The inflation-restriction sequence for $M=M_W/pM_W$ now gives
an exact sequence 
$$
\xymatrix{
0=H^1(G,M) \ar[r] &H^1(\Gamma, M) \ar[r] &H^1(K,M)^G \ar[r] &H^2(G,M) =0.}
$$
Using the equalities $H^1(K,M)^G=\Hom^{cont}(K,M)^G$ we see that (2) holds.

It remains to show (3). Since $M$ is a finitely generated projective $kG$-module,
$\mathrm{Hom}^{cont}(K,M)$ is isomorphic to a direct summand of a $kG$-module
that is induced from the trivial subgroup of $G$, so
$H^1(K,M)=\mathrm{Hom}^{cont}(K,M)$ is a cohomologically trivial $kG$-module.
This implies that the Lyndon/Hochschild-Serre spectral sequence for $H^2(\Gamma,M)$ 
degenerates, and we get isomorphisms
\begin{eqnarray*}
H^2(\Gamma,{M}) &\cong& H^0(G,H^2(K,{M})) \\
&\cong& H^2(K,{M})^G\; \subset \; H^2(K,M),
\end{eqnarray*}
where the composite map is the restriction map \cite[Theorem 6.8.2]{Weibel}. 
\end{proof}

\begin{lemma}
\label{lem:tangent}
Condition (ii)(a) in Theorem \ref{thm:genresult}    
is equivalent to the condition that $R_W(\Gamma,V)$ is of the form
$W[[t]]/I$ where the ideal $I$ is contained in $\gm=(p,t^2)$.
\end{lemma}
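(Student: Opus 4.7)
The plan is to translate both conditions into the single assertion that the mod-$\mathfrak{m}_W$ tangent space
$$\mathfrak{t}_R := \mathfrak{m}_R/(\mathfrak{m}_R^2 + \mathfrak{m}_W R)$$
of $R=R_W(\Gamma,V)$ is one-dimensional over $k$, where $\mathfrak{m}_W=pW$. Since everything then hinges on Mazur's tangent-space calculation and a short direct manipulation, no step is really an obstacle; the only care needed is to keep the distinction between $\mathfrak{m}_R^2$ and $\mathfrak{m}_R^2+\mathfrak{m}_WR$ straight throughout.

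For the cohomological side, I would first recall the standard identification: a $W$-algebra map $R\to k[\epsilon]$ kills $\mathfrak{m}_R^2+\mathfrak{m}_WR$, so by versality
$$\mathrm{Def}_V(k[\epsilon])\;\cong\;\mathrm{Hom}_{\hat{\mathcal{C}}}(R,k[\epsilon])\;\cong\;\mathrm{Hom}_k(\mathfrak{t}_R,k).$$
On the other hand, writing a lift in the form $\rho_\epsilon(\gamma)=(1+\epsilon\, c(\gamma))\rho(\gamma)$ gives the usual natural isomorphism $\mathrm{Def}_V(k[\epsilon])\cong H^1(\Gamma,M)$. Combining these with Lemma \ref{lem:cohom}(2) produces the $k$-vector space equalities
$$\dim_k\mathfrak{t}_R \;=\; \dim_k H^1(\Gamma,M) \;=\; \dim_k\mathrm{Hom}^{cont}(K,M)^G,$$
so condition (ii)(a) is exactly the statement $\dim_k\mathfrak{t}_R=1$.

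For the presentation side, I would argue both implications directly. If $\dim_k\mathfrak{t}_R=1$, choose $\tilde t\in\mathfrak{m}_R$ lifting a basis vector of $\mathfrak{t}_R$. Complete Nakayama, applied to the complete local Noetherian $W$-algebra $R$, shows that the continuous $W$-algebra homomorphism $\varphi\colon W[[t]]\to R$ with $\varphi(t)=\tilde t$ is surjective. To see that $I:=\ker\varphi$ lies in $(p,t^2)$, I would take $f\in I$ and write $f=a_0+a_1 t+g$ with $a_0,a_1\in W$ and $g\in(t^2)$; then $a_0\in\mathfrak{m}_W=pW$ because $R$ has residue field $k$, and $a_1\in pW$ because the image of $a_1\tilde t$ in $\mathfrak{t}_R$ vanishes and $\tilde t$ is a $k$-basis vector, whence $f\in(p,t^2)$. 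Conversely, if $R=W[[t]]/I$ with $I\subset(p,t^2)$, then using $(p,t)^2+pW[[t]]=(p,t^2)$ in $W[[t]]$ one computes
$$\mathfrak{t}_R \;=\; (p,t)\big/\bigl((p,t^2)+I\bigr) \;=\; (p,t)/(p,t^2) \;=\; k\cdot\overline{t},$$
so $\dim_k\mathfrak{t}_R=1$. Combining the two bullet points finishes the equivalence.
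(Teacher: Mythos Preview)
Your proof is correct and follows essentially the same approach as the paper: both reduce the equivalence to the single assertion $\dim_k\bigl(\mathfrak{m}_R/(\mathfrak{m}_R^2+pR)\bigr)=1$ via Mazur's identification of the tangent space with $H^1(\Gamma,M)$ together with Lemma~\ref{lem:cohom}(2), and then handle the presentation side by the standard Nakayama/cotangent-space argument. Your write-up supplies slightly more detail (the explicit verification that $I\subset(p,t^2)$), and you call the \emph{cotangent} space $\mathfrak{t}_R$ the ``tangent space,'' but neither affects the substance.
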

\begin{proof}
For a ring $A$ in $\cat$ we denote its maximal ideal by $\gm_A$.
The cotangent space $t^*_A$ of $A$ is the vector space 
$\gm_A/(\gm_A^2+pA)$ over $k$. Recall that 
a morphism $A\to B$ in $\cat$ is surjective if and only if
the induced map $t_A^*\to t_B^*$ is surjective, and that 
$\dim_kt_A^*$ is the minimal number $s$ so that $A$ is isomorphic to
a quotient of $W[[t_1,\ldots,t_s]]$.
For $R=R_{W}(\Gamma,V)$ the tangent space $t_R=\Hom_k(t_R^*,k)$ is
naturally isomorphic to $H^1(\Gamma,M)$ \cite[p. 391]{maz1}. Combining this
with the previous lemma we see that condition (ii)(a) in Theorem \ref{thm:genresult}
is equivalent to $\dim_k t_R=1$. Rings of the form stated clearly have 
1-dimensional cotangent spaces. Conversely, if $t_R^*$ is $1$-dimensional
then any morphism $W[[t]]\to R$ in $\cat$ sending $t$ to an element of $\gm_R$ which
is not zero in $t_R^*$, is surjective and its kernel is contained in
$(p,t^2)$.
\end{proof}

\subsection{Proof that (ii) implies (i) in Theorem \ref{thm:genresult}}
Throughout this subsection, we assume that
condition (ii) of Theorem \ref{thm:genresult} holds.
Put $d=\dim_k(V)$, choose a basis of $V_W$ over $W$ and let the $G$-action
on $V_W$ be given by $\rho_W\colon\; G\to \GL_d(W)$.
We put 
\begin{eqnarray*}
R&=&W[[t]]/(p^nt,t^2)\\
&=& W\; \oplus\; (W/p^nW)t.
\end{eqnarray*}
The morphism $R\to W$ in $\cat$ sending $t$ to $0$ gives rise to an exact sequence of groups
\begin{equation*}
\xymatrix {
0\ar[r]&\M_d(W/p^nW) \ar[r]^-\varphi&\mathrm{GL}_d(R)\ar[r] &\mathrm{GL}_d(W)\ar[r]&1
}
\end{equation*}
where $\varphi(A)=1+tA$.  
Note that the action of $\GL_d(W)$ on $\M_d(W/p^nW)$ induced by the exact
sequence is given by the conjugation action on $\M_d(W)$, taken modulo $p^n$.
Using our basis of $V_W$ we can identify $\M_d(W/p^nW)$ with
$M_W/p^nM_W$ and this identification respects $G$-action. 
Thus, condition (ii) provides an injective homomorphism $\alpha$
in the following diagram.
\begin{equation}
\label{eq:lifter}
\xymatrix @R=3pc{
1\ar[r]&K \ar[d]^{\alpha} \ar[r]& \Gamma \ar @{-->}[d]^{\rho_R} \ar[r]&G\ar[d]^{\rho_W}\ar[r]&1\\
0\ar[r]&\M_d(W/p^nW) \ar[r]^-\varphi&\mathrm{GL}_d(R)\ar[r] &\mathrm{GL}_d(W)\ar[r]&1
}
\end{equation}
Note that the rows in this diagram are exact.  By \cite[p. 179]{ArtinTate}, the obstruction 
to the existence of a homomorphism $\rho_R$ which makes (\ref{eq:lifter}) commute lies
in the group  $H^2(G,\M_d(W/p^nW))$.  By Lemma \ref{lem:cohom} part (1),
this is a trivial group, so $\rho_R$ exists.  
Since $\alpha$ and $\rho_W$ are injective, the map $\rho_R$ is also
injective. Thus, $\Gamma$ acts faithfully on the deformation
over $R$ associated to $\rho_R$, so it certainly acts faithfully on the universal deformation.

Writing $R^\u=R(\Gamma,V)$ the universal property of deformation rings
provides us with a unique morphism $\gamma:R^\u\to R$ in $\cat$ and a lift
$\rho^\u\colon\; \Gamma\to \GL_d(R^\u)$ such that the composite map
$\gamma \circ \rho^\u:\Gamma\to \GL_d(R)$ is strictly equivalent to $\rho_R$.
We will show that  $\gamma$ is an isomorphism.

Consider the surjection $\beta:R \to R/pR \cong k[t]/(t^2)$. If $\gamma$ 
is not surjective, then $\beta \circ \gamma$ would be the composition
of the residue map $r:R^\u \to k$ with the natural $k$-algebra inclusion
$\iota:k \to k[t]/(t^2)$.  This would imply that 
$$\beta \circ \rho_R:\Gamma \to \GL_d(R/pR) =\GL_d(k[t]/(t^2))$$ is strictly equivalent to 
$\beta \circ \gamma \circ \rho^\u = \iota \circ r \circ \rho^\u$.  However,
by condition (ii)(b) there is an element $g\in K$ so that $\alpha(g)$
does not lie in $pM_W/ p^n M_W = p M_d(W/p^nW)$.   This $g$ is 
in the kernel of  $\iota \circ r \circ \rho^\u $ but not in the kernel of
$\beta \circ \rho_R$, so these representations cannot be strictly equivalent.
We conclude that 
$\gamma:R(\Gamma,V)\to R$ is surjective.  Thus we can and will replace
$\rho^\u$ by a strictly equivalent lift so that $\gamma \circ \rho^\u = \rho_R$.

Now define a morphism
$\pi\colon\; W[[t]]\to R^\u$ in $\cat$ by letting $\pi(t)$ be any
element in $R^\u$ that is mapped by $\gamma$ to $t\in R$.
By condition (ii)(a) and Lemma \ref{lem:tangent} we then see that
$\pi$ is surjective. In the chain of surjections 
$$\xymatrix{W[[t]]\ar[r] & R^\u\ar[r]& R \ar[r]& W}$$
we will write $t$ for the image of $t$ in any of the rings. So we have
$t=0$ in $W$, and $t^2=p^nt=0$ in $R$, and our aim is to show that
$t^2=p^nt=0$ in $R^\u$.  Note that 
$$R^\u/t R^\u = W\quad\mbox{ and }\quad R^\u/(tR^\u + p^n R^\u) = W/p^nW.$$

Suppose that we are in the first case of condition (b) of (ii),
that is, we have $$\alpha(g)\circ\alpha(h) \not\equiv \alpha(h)\circ\alpha(g)
\;\bmod pM_W/p^nM_W$$ for certain $g,h\in K$. Write
$\rho^\u(g)=1+tA$ and $\rho^\u(h)=1+tB$ with $A,B\in \M_d(R^\u)$
whose images modulo $(p^n,t)$ are $\alpha(g)$ and $\alpha(h)$.
Since $K$ is abelian 
we have 
$$(1+tA)(1+tB)=(1+tB)(1+tA),$$ so $t^2(AB-BA)=0$.
The matrix $AB-BA$ now has some entry which is a unit in $R^\u$.
Hence  $t^2=0$ in $R^\u$ and $A$ has an entry which is a unit in $R^\u$.
Our element $g\in K$ has order dividing  $p^n$.   Hence 
$$1=(1+tA)^{p^n}=1+p^nt A $$ so $p^nt A=0$. Since $A$  has an entry which is a unit,
it follows that $p^nt=0$ in $R^\u$.

Now suppose that we are in the second case of condition (b) of (ii),
so we have $n=1$ and $p=2$.
For each $g\in K$ write $\rho^\u(g)=1+tA_g$ with $A_g\in \M_d(R^\u)$
such that $A_g\equiv \alpha(g)\bmod (p,t)$ where $(p,t) = (2,t)$.
Since all $g\in K$ are annihilated by $2$ we have
$$
1=(1+tA_g)^2=1+2tA_g+t^2A_g^2.
$$
Let us now consider the free $R^\u$-module $P=\Map(K,M_d(R^\u))$.
This contains the elements $v_1\colon\;g\mapsto A_g$ and
$v_2\colon\;g\mapsto A_g^2$. Then on the one hand, the identity
above shows that $2tv_1+t^2v_2=0$. On the other hand
the second case of condition (b) of (ii) 
implies that the images of $v_1$ and $v_2$ in $P\otimes_{R^\u}k$
are linearly independent over $k$. By Nakayama's lemma
this means that $v_1$ and $v_2$ are part of a basis of $P$ over $R^\u$.
So it follows that $2t=t^2=0$ in $R^\u$.

\subsection{Proof that (i) implies (ii) in Theorem \ref{thm:genresult}}
Throughout this subsection, we assume that condition (i)
of Theorem \ref{thm:genresult} holds.
Property (ii)(a) of Theorem \ref{thm:genresult} follows
from Lemma \ref{lem:tangent}.

The action of $G$ on $V_W$
gives a morphism 
$$R=R(\Gamma,V)=W[[t]]/(p^nt,t^2)\to W.$$ 
There is only
one such morphism and it sends $t\in R$ to $0$. Since we are
assuming that $R$ is the universal deformation ring $R(\Gamma,V)$
and $R \to W$ is surjective,  we can now find maps $\rho_W$, $\rho_R$ and $\alpha$ as
in the commutative diagram (\ref{eq:lifter}).
The map $\rho_R$ is injective by assumption (i) so $\alpha$
is also injective, $G$-equivariant and continuous.  
Since
$\M_d(W/p^nW)$ is abelian and has the discrete topology, this implies
$K$ is finite and abelian, so $\Gamma$ is also finite.  

In order to prove (ii)(b) we now assume that (ii)(b) does not hold, and we will derive
a contradiction. The first step in doing this is to lift $\rho_R$ 
to a suitable extension $R'$ of $R$, i.e., 
to a ring $R'$ in $\cat$ with an ideal
$I$ such that $R'/I=R$. In all cases that we will consider we have 
$I^2=0$, so we have an exact sequence 
\begin{equation}
\label{eq:smalllift}
\xymatrix @R=3pc {
&& &\Gamma \ar[d]^{\rho_R} &\\
0\ar[r]&\M_d(I) \ar[r]^-\varphi&\mathrm{GL}_d(R')\ar[r]\ar @{<--}^{\rho'}[ur] &\mathrm{GL}_d(R)\ar[r]&1
}
\end{equation}
where $\varphi(A)=1+A$. We now show how to choose $R'$
and produce the restriction of the lift $\rho'$ to $K$.

Let us write $$\palpha\colon\; K\to M=M_d(k)$$ for the map 
sending $g\in K$ to $\alpha(g)\bmod pM_W/p^nM_W$.
Since we assumed that (ii)(b) does not hold, the set $\palpha(K)$
is contained in a commutative sub-$k$-algebra of $\M_d(k)$.

If $p\ne 2$ then we take 
\begin{eqnarray*}
R'&=&W[[t]]/(p^nt,pt^2,t^3)\\
&=& W\;\oplus\; (W/p^nW)t \;\oplus\; k t^2
\end{eqnarray*}
and we let $$\rho'(g)=1+\alpha(g)t +\palpha(g)^2 t^2/2$$ for $g\in K$.
Using that the elements $\palpha(K)$ commute in $M_d(k)$ one easily shows that
this exponential map is a homomorphism $K\to \GL_d(R')$.  

Now suppose that $p=2$ and $n\ge 2$.  We take $R'$ as above.
For $g\in K$ we now claim that $g$ and $1+t\alpha(g)\in \Gl_d(R')$ 
have the same order. To see this we note first that
\begin{eqnarray*}
(1+t\alpha(g))^2 &=& 1+ 2t\alpha(g) + t^2\,(\palpha(g)\circ\palpha(g))\quad\mbox{
and }\\ 
(1+t\alpha(g))^{2^i} &=& 1 +2^it\alpha(g)\qquad\mbox{ for $2\le i\le
n$.} 
\end{eqnarray*}
If $g$ has order $1$, the claim is clear from the fact that $\alpha$ is injective.
For $g$ of order $2$ we have $\alpha(g)\in
\M_d(2^{n-1}W/2^nW)$, so $\palpha(g)=0$ and 
the first equality implies $(1 + t\alpha(g))^2 = 1$.  Conversely, if $(1 + t \alpha(g))^2 = 1$,
then this equality implies $2 \alpha(g) = 0$, so $2g$ is the identity because $\alpha$ is
injective.  The second equality and
the injectivity of $\alpha$ similarly imply that $g$ and $1 + t \alpha(g)$ have the same
order if either has order $2^i$ for some $i > 1$.
Next, we remark that the elements $1+t\alpha(g)\in\GL_d(R')$ with
$g$ ranging over $K$ commute with each other because 
the elements of $\palpha(K)$ commute in $M$. Since $K$ is finite and abelian,
we can write $K$ as a direct sum of
cyclic groups.  By setting $$\rho'(g)=1+t\alpha(g)$$ for a generator
of each of these cyclic groups, we  obtain a lift $\rho':K\to \GL_d(R')$ of
the restriction of $\rho_R$ to $K$.

Now suppose that $p=2$ and $n=1$. Our assumption that condition (ii)(b)
fails then provides us with an element $a\in k$ such that for all
$g\in K$ we have $\alpha(g)^2=a\,\alpha(g)$.  Choose an element
$\hat{a} \in W$ with image $a$ in $W/pW = k$, and 
define $$R'=W[[t]]/(2t^2,t^3, 2t+\hat{a}t^2).$$ Then $R = R'/I$ when $I$ 
is the $1$-dimensional vector space
over $k$ generated by $t^2$, and in $R'$ we have $2t=-\hat{a}t^2$ and $2t^2 = 0 = t^3$.
For any $g\in K$ now choose $A_g\in \M_d(R')$ with
$A_g\bmod(p,t) =\alpha(g)$. Then we have
$$
(1+tA_g)^2= 1+2t A_g + t^2 A_g^2 = 1+t^2(-a\alpha(g)+\alpha(g)^2)=1.
$$
Since the elements of $\alpha(K)$ commute in $M=\M_d(k)$ and the element
$t^2$ is annihilated by $(p,t)$ in $R'$,
we also know that the elements $1+t A_g\in \GL_d(R')$
all commute when $g$ ranges over $K$.  So we can find a lift $\rho':K\to \GL_d(R')$ of
the restriction of $\rho_R$ to $K$
by setting $$\rho'(g)=1+tA_g$$ for $g$ in an $\F_2$-basis of $K$.

In summary, in all cases we produced an extension $R'$ of $R$, and 
$$\rho'\colon\; K\to \GL_d(R')$$ lifting the restriction of $\rho_R$ to $K$.
Moreover, $R=R'/I$ where the ideal $I$ is a $k$-vector space of dimension~$1$.
We now claim that we can extend the homomorphism $\rho'$ to $\Gamma$ so that
diagram (\ref{eq:smalllift}) is commutative. Note that
the group $\M_d(I)$ has a conjugation action by
$\Gl_d(R)$, and through $\rho_R$ it has a $\Gamma$-action. This 
$k\Gamma$-module $M_d(I)$ is isomorphic to $M$, so the
obstruction to the existence of a lift $\rho'$ is a class in
$H^2(\Gamma,M)$. 
We know that this lift exists if we restrict to $K$, so the restriction
of this class to $K$ is trivial in $H^2(K,M)$. But by Lemma \ref{lem:cohom} (3)
this restriction map on cohomology groups is injective, so the first class was
trivial as well.  This shows the existence of $\rho'$ lifting $\rho_R$. 

By the deformation ring property, this implies that the morphism $R'\to R$ is
split, i.e., there is a morphism $R\to R'$ so that the composition $R\to R'\to
R$ is the identity.  But then the maps are isomorphisms on the (one-dimensional) cotangent spaces, so
they are also surjective and $R$ is isomorphic to $R'$ which is clearly false.
With this contradiction the proof of Theorem \ref{thm:genresult} is complete.

\section{The inverse problem for $R= W[[t]]/(p^nt,t^2)$}
\label{s:inverse}

In this section, we use Theorem \ref{thm:genresult} to prove Theorem \ref{thm:main1}.
We first establish a special case.

\begin{thm}
\label{thm:twisted}
Suppose $n \ge 1$.
Define 
$$G_0=\mathrm{Gal}(\mathbb{F}_{p^2}/\mathbb{F}_p)\quad\mbox{ and }\quad 
G= \mathbb{F}_{p^2}^{\,*}\semid G_0$$
where $G_0$ acts on the multiplicative group $\mathbb{F}_{p^2}^{\,*}$ by restricting the natural action of $G_0$ on $\mathbb{F}_{p^2}$ to
$\mathbb{F}_{p^2}^{\,*}$. The natural action of  $G_0$ and $\mathbb{F}_{p^2}^{\,*}$  on $V=\mathbb{F}_{p^2}$  makes
$V$ into a projective and simple $\mathbb{F}_p G$-module. 
The endomorphism ring $M=\End_{\mathbb{F}_p}(V)$ is isomorphic to the twisted group ring $\mathbb{F}_{p^2}^{\,\dagger}G_0$
a an $\mathbb{F}_p$-algebra.
There exists a simple projective $\mathbb{F}_pG$-module $V'$ such that
\begin{equation}
\label{eq:v'}
M\cong V'\oplus \mathbb{F}_pG_0
\end{equation}
as $\mathbb{F}_pG$-modules. Let $K $ be a projective $(\mathbb{Z}/p^n)G$-module such that 
$$\mathbb{F}_p\otimes_{\mathbb{Z}/p^n} K \cong V'$$ as $\mathbb{F}_pG$-modules.
Let $\Gamma$ be the semidirect product 
$$\Gamma=K \semid_{\delta} \,G$$ where 
$\delta:G \to \mathrm{Aut}(K)$ is the group homomorphism given by
the $G$-action on the $(\mathbb{Z}/p^n)G$-module $K$.
Let $V$ also stand for the inflation of $V$ to an $\mathbb{F}_p\Gamma$-module.  
Then 
we have a $\mathbb{Z}_p$-algebra isomorphism
$$R_{\mathbb{Z}_p}(\Gamma,V)\cong \mathbb{Z}_p[[t]]/(p^nt, t^2).$$
\end{thm}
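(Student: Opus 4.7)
The plan is to verify the hypotheses and condition (ii) of Theorem \ref{thm:genresult} for the constructed $(\Gamma, V)$, and then invoke the implication (ii)$\Rightarrow$(i) to conclude. The preliminary hypotheses are easy: $\Gamma$ is finite so Hypothesis \ref{hyp:finite} holds trivially; $V = \F_{p^2}$ is simple and faithful as an $\F_p G$-module since $\F_{p^2}^{\,*}$ acts by $\F_{p^2}$-scalar multiplication, and $\End_{\F_p G}(V) = \F_{p^2}^{G_0} = \F_p$. Projectivity of $V$ is automatic when $p$ is odd because $p \nmid |G| = 2(p^2-1)$; when $p = 2$, $G_0$ is a Sylow $p$-subgroup of $G$ and $V|_{G_0}$ is the regular $\F_2 G_0$-module, which suffices for projectivity over $\F_2 G$.

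The key structural input is the $G$-module decomposition of $M = \End_{\F_p}(V)$. Write $M = \F_{p^2} \oplus \F_{p^2}\sigma$, where $\F_{p^2}$ acts on $V$ by multiplication and $\sigma$ is the Frobenius, so that the relation $\sigma\mu = \mu^p\sigma$ identifies $M$ with $\F_{p^2}^{\,\dagger} G_0$. A direct computation of the conjugation action of $G$ shows that $\F_{p^2}^{\,*}$ acts trivially on $\F_{p^2}$ and through the character $\chi(\lambda) = \lambda^{1-p}$ on $\F_{p^2}\sigma$, while $G_0$ acts by the Galois action on coefficients of both summands. The first summand is isomorphic to $\F_p G_0$ inflated to $G$, and the second is a simple projective $\F_p G$-module, which is the desired $V'$; simplicity follows because the image of $\chi$ has order $p+1$, so the $\F_p$-subalgebra of $\F_{p^2}$ generated by $\chi(\F_{p^2}^{\,*})$ is all of $\F_{p^2}$.

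Condition (ii)(a) then follows: since $K$ is projective over $(\Z/p^n)G$ and $pM = 0$, any continuous $G$-equivariant map $K \to M$ factors through $K/pK \cong V'$, and the decomposition gives $\Hom^{cont}(K, M)^G \cong \End_G(V') \oplus \Hom_G(V', \F_p G_0)$. The first summand is $\F_p$ by the same centralizer argument as for $V$, and the second vanishes because $\F_{p^2}^{\,*}$ acts trivially on $\F_p G_0$ but has no nonzero fixed vector in $V'$. For (ii)(b), I lift the decomposition $M = V' \oplus \F_p G_0$ to a decomposition $M_W = P_{V'} \oplus P_{G_0}$ of the projective $WG$-module $M_W$. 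Then $P_{V'}/p^n P_{V'}$ is projective over $(\Z/p^n)G$ with reduction $V'$, so by uniqueness of projective covers it is isomorphic to $K$, and the resulting inclusion into $M_W/p^n M_W$ defines the required injective $G$-equivariant $\alpha$.

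The non-commutativity in the first alternative of (ii)(b) is verified directly in $V' = \F_{p^2}\sigma$: the multiplication rule $(\mu\sigma)(\nu\sigma) = \mu\nu^p$ gives $[\sigma, \zeta\sigma] = \zeta^p - \zeta \ne 0$ for any $\zeta \in \F_{p^2} \setminus \F_p$, and lifting $\sigma$ and $\zeta\sigma$ to elements $g, h \in K$ yields $\alpha(g), \alpha(h)$ whose commutator is nonzero in $M = M_W/pM_W$, hence nonzero modulo $pM_W/p^n M_W$ for every $n \ge 1$. Theorem \ref{thm:genresult} then gives $R_{\Z_p}(\Gamma, V) \cong \Z_p[[t]]/(p^n t, t^2)$. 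The main technical obstacle is the careful identification of $V'$ as the specific summand $\F_{p^2}\sigma$ on which $\F_{p^2}^{\,*}$ acts by the character $\lambda^{1-p}$; once this is in place, both conditions (ii)(a) and (ii)(b) reduce to short computations in the twisted group algebra $\F_{p^2}^{\,\dagger}G_0$.
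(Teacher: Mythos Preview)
Your proof is correct and follows essentially the same route as the paper: identify $M$ with the twisted group algebra $\F_{p^2}^{\,\dagger}G_0$, compute the conjugation action of $G$ to obtain the decomposition $M \cong V' \oplus \F_p G_0$ with $V' = \F_{p^2}\sigma$, and then verify conditions (ii)(a) and (ii)(b) of Theorem~\ref{thm:genresult}. The only noteworthy difference is in constructing $\alpha$ for (ii)(b): the paper computes $\Hom_{(\Z/p^n)G}(K,M_W/p^nM_W)\cong \Z/p^n$ and takes a generator, whereas you lift the decomposition of $M$ to $M_W$ and identify $P_{V'}/p^nP_{V'}$ with $K$; these are equivalent moves, and both lead to the same non-commutativity check $(\sigma)(\zeta\sigma)=\zeta^p\neq\zeta=(\zeta\sigma)(\sigma)$ in $\F_{p^2}^{\,\dagger}G_0$.
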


\begin{proof}
If $p=2$, then $G$ is isomorphic to the symmetric group $S_3$ on 3 letters and $V$ is 
the unique simple projective $\mathbb{F}_pG$-module, up to isomorphism. If $p\ge 3$, then
the order of $G$ is relatively prime to $p$ and $V$ is also a simple projective $\mathbb{F}_p G$-module.

Since $V=\mathbb{F}_{p^2}$ is a Galois algebra over $\mathbb{F}_p$ with Galois  group $G_0$, it follows that 
$M=\mathrm{End}_{\mathbb{F}_p}(V)$ is canonically isomorphic to the twisted group
ring $\mathbb{F}_{p^2}^{\, \dagger} G_0$ as an $\mathbb{F}_p$-algebra.  
This isomorphism defines an $\mathbb{F}_pG$-module structure on $\mathbb{F}_{p^2}^{\,\dagger}G_0$ by conjugation as follows. Let
$G_0=\langle \sigma\rangle$, let $\mathbb{F}_{p^2}^{\,*}=\langle \zeta\rangle$ and let $x=b_0+b_1\sigma\in \mathbb{F}_{p^2}^{\,\dagger}G_0$,
so $b_0,b_1\in \mathbb{F}_{p^2}$. Then
\begin{eqnarray}
\label{eq:numberthis1}
\sigma . x &=& \sigma x \sigma^{-1} \;= \;(b_0)^p + (b_1)^p\sigma \quad\mbox{ and}\\
\label{eq:numberthis2}
\zeta.x&=&\zeta x \zeta^{-1}\,\;=\;b_0 + b_1\zeta^{1-p}\sigma .
\end{eqnarray}

We have $\mathbb{F}_{p^2}^{\,\dagger}G_0 = \mathbb{F}_{p^2}\oplus \mathbb{F}_{p^2}\sigma$ as $\mathbb{F}_p$-vector spaces. The above $G$-action on $\mathbb{F}_{p^2}^{\,\dagger}G_0$
implies that both $\mathbb{F}_{p^2}$ and $\mathbb{F}_{p^2}\sigma$ are $\mathbb{F}_pG$-submodules of $\mathbb{F}_{p^2}^{\,\dagger}G_0$. It follows for example
from the normal basis theorem that $\mathbb{F}_{p^2}\cong \mathbb{F}_pG_0$ as $\mathbb{F}_pG$-modules, where $\mathbb{F}_{p^2}^{\,*} \subset G$
acts trivially by conjugation on $\mathbb{F}_{p^2}$.  Thus to prove (\ref{eq:v'}) it suffices to show that
$V' = \mathbb{F}_{p^2}\sigma$ is a simple projective $\mathbb{F}_pG$-module.  Since $V$ is a projective $\mathbb{F}_pG$-module, so are $M =\mathrm{End}_{\mathbb{F}_p}(V)\cong 
 \mathbb{F}_{p^2}^{\, \dagger}G_0$ and $V'$. 
Considering the action of $\mathbb{F}_{p^2}^{\,*}=\langle \zeta\rangle$ on $\mathbb{F}_{p^2}\sigma$, we see that the action
of $\zeta$ has eigenvalue $\zeta^{1-p}$. Since $\zeta^{1-p}$ lies in $\mathbb{F}_{p^2}-\mathbb{F}_p$, 
it follows that $V'=\mathbb{F}_{p^2}\sigma$ is a simple projective $\mathbb{F}_pG$-module.  Moreover, 
$$\mathrm{Hom}_{(\mathbb{Z}/p)G}(V', \mathbb{F}_pG_0) = 0$$ 
because $\mathbb{F}_pG_0$ has $\mathbb{F}_p$-dimension $2$
and is not isomorphic to $V'$ since $\mathbb{F}_{p^2}^{\,*}$ acts trivially on $\mathbb{F}_pG_0$.   

For ease of notation we set $W= \Witt(\mathbb{F}_p) = \mathbb{Z}_p$.  
Let $V_{W}$ be a projective $WG$-module such that $\mathbb{F}_p \otimes_{W} V_{W} \cong V$ as $\mathbb{F}_pG$-modules.
Let $M_{W} =\mathrm{End}_{W}(V_{W})$.
To complete the proof, it will suffice to show that $G$, $K$, $M$ and $M_{W}$ satisfy the conditions  in Theorem \ref{thm:genresult}(ii)
when $k = \mathbb{F}_p$.

For all $p$, $K$ is a finitely generated $(\mathbb{Z}/p^n)G$-module.
Define $\Gamma=K \semid_{\delta} \,G$ where 
$\delta:G \to \mathrm{Aut}(K)$ is the group homomorphism given by
the $G$-action on the $(\mathbb{Z}/p^n)G$-module $K$.
Since by our above calculations, $M\ \cong (K/pK)\oplus \mathbb{F}_pG_0$ as 
$(\mathbb{Z}/p)G$-modules, it follows that
\begin{eqnarray*}
\mathrm{Hom}^{cont}(K,M)^G &=& \mathrm{Hom}_{(\mathbb{Z}/p)G}(K/pK,M)\\
& \cong& \mathrm{Hom}_{(\mathbb{Z}/p)G}(V',V'\oplus \mathbb{F}_pG_0)  \\
&\cong &\mathbb{F}_p
\end{eqnarray*}
giving condition (a) of 
Theorem \ref{thm:genresult}(ii). Since $K$ and $M_{W}/p^nM_{W} $ are projective
$(\mathbb{Z}/p^n)G$-modules, it follows that $\mathrm{Hom}_{(\mathbb{Z}/p^n)G}(K,M_{W}/p^n M_{W})$ is a projective
$(\mathbb{Z}/p^n)$-module $H$ such that 
$$H/pH=\mathrm{Hom}_{\mathbb{F}_pG}(K/pK,M)\cong \mathbb{F}_p.$$ Therefore,
$$\mathrm{Hom}_{(\mathbb{Z}/p^n)G}(K,M_{W}/p^n M_{W})\cong \mathbb{Z}/p^n$$ and there exists an injective $(\mathbb{Z}/p^n)G$-module
homomorphism $\alpha:K\to M_{W}/p^n M_{W}$ whose image is not contained in
$pM_{W}/p^n M_{W}$. 
By the above calculations in the
twisted group algebra $\mathbb{F}_{p^2}^{\, \dagger}G_0$, we see that the image of $\alpha$ mod $pM_{W}/p^n M_{W}$ is 
isomorphic to $\mathbb{F}_{p^2}\sigma$. Since for example 
$$(\sigma)\cdot (\zeta\sigma)=\zeta^p\neq
\zeta=(\zeta\sigma)\cdot (\sigma)$$ in the algebra $\mathbb{F}_{p^2}^{\, \dagger}G_0 \cong M $, we obtain that the image of $\alpha$ mod $pM_{W}/p^n M_{W}$ is not
commutative with respect to the multiplication in the ring $M_{W}/p^n M_{W}$.
This gives condition (b) of Theorem \ref{thm:genresult}(ii) and completes the proof. 
\end{proof}

\begin{rem}
\label{rem:s3worksalmostalways}
If $p>3$, we can replace the group $G$ in Theorem \ref{thm:twisted} by
the symmetric group $S_3$ and $V$ by a 2-dimensional simple projective 
$\mathbb{F}_p S_3$-module (which is unique up to isomorphism). It follows then that 
$$M=\mathrm{Hom}_{\mathbb{F}_p}(V,V)\cong
\mathbb{F}_p[\mathbb{Z}/2]\oplus V$$ as $\mathbb{F}_pG$-modules, which means that we can take
$V'=V$  in this case.
\end{rem}

\begin{rem}
\label{rem:negative}
As mentioned in the introduction, in subsequent work on Question \ref{q:inverse}, 
Rainone proved in \cite{R} that if $p > 3$ and $1 \le m \le n$, the ring 
$\mathbb{Z}_p[[t]]/(p^n,p^m t)$ is a universal deformation ring relative to 
$W = \mathbb{Z}_p$.  These rings and the rings of Theorems \ref{thm:main1} and 
\ref{thm:twisted} form disjoint sets of isomorphism classes.  
Rainone's work gave the first negative answers to two questions of  Bleher and 
Chinburg (Question 1.2 of \cite{lcann} and Question 1.1 of \cite{BC}).  Later we observed that 
Theorem \ref{thm:twisted} also gives a negative answer to Question 1.2 of \cite{lcann} when 
$p > 2$. 
\end{rem}

\medbreak
\noindent {\bf Completion of the  Proof of Theorem \ref{thm:main1}.}
Let $k$, $p$, $W$ and $n$ be as in Theorem \ref{thm:main1}.
By Theorem \ref{thm:twisted}, there is a finite group $\Gamma$ and
a representation $V_0$ of $\Gamma$ over $\mathbb{F}_p$ such that
$\mathrm{End}_{\mathbb{F}_p G}(V_0) = \mathbb{F}_p$ and
the universal deformation ring $R_{\mathbb{Z}_p}(\Gamma,V_0)$ is isomorphic to
$\mathbb{Z}_p[[t]]/(p^n t, t^2)$.  Let $V = k \otimes_{\mathbb{F}_p} V_0$.
Then $$\mathrm{End}_{kG}(V) \cong k \otimes_{\mathbb{F}_p} \mathrm{End}_{\mathbb{F}_p G}(V_0) 
\cong k.$$
By Theorem \ref{thm:basechange}, the universal deformation ring $R_{W}(\Gamma,V)$
is isomorphic to the completion of $W \otimes_{\mathbb{Z}_p} \mathbb{Z}_p[[t]]/(p^n t, t^2)$
with respect to its maximal ideal.  This completion is isomorphic to 
$W[[t]]/(p^n t, t^2)$.  It remains to show that this ring is not a 
complete intersection if $p^n W \ne \{0\}$.  This is clear if $W$ is regular.
In general, if one assumes that $W[[t]]/(p^n t, t^2)$ is a complete
intersection, then $W$ is a quotient $S/I$ for some regular complete
local commutative Noetherian ring $S$ and a proper ideal $I$ of $S$.  If $S'=S[[t]]$, then 
$$W[[t]]/(p^n t, t^2) = S'/I'$$ when $I'$ is the ideal of $S'$ generated
by $I$, $p^n t$ and $t^2$.   Since 
$$\mathrm{dim}\, W[[t]]/(p^nt,t^2) = \mathrm{dim}\,W,$$
we obtain by \cite[Thm. 21.1]{matsumura} that
\begin{eqnarray}
\label{eq:equal}
\mathrm{dim}_k(I'/m_{S'}I') &=& \mathrm{dim}\, S' -\mathrm{dim}\, (S'/I')\\
&=& \mathrm{dim}\,S +1 - \mathrm{dim}\, (S/I) \nonumber\\
&\le& \mathrm{dim}_k(I/m_SI)+1.\nonumber
\end{eqnarray}
Using power series expansions, we see that 
$$\mathrm{dim}_k(I'/m_{S'}I') =\mathrm{dim}_k(I/m_SI)+2$$ if $p^n W \ne \{0\}$.
Since this contradicts (\ref{eq:equal}), $W[[t]]/(p^nt,t^2)$ is not a 
complete intersection if $p^nW\neq\{0\}$. This completes the proof of Theorem
\ref{thm:main1}.

\begin{rem}
\label{rem:referee}
The referee of this paper asked whether in our examples of universal deformation rings
$R_W(\Gamma,V)$ which are not complete intersections, we have
$H^3(\Gamma,\mathrm{End}_k(V))=0$. 
Consider, for example, the case when $n=1$ in Theorem \ref{thm:twisted} so that 
$k=\mathbb{F}_p$.  We claim that $H^3(\Gamma,\mathrm{End}_{\mathbb{F}_p}(V))\neq 0$.

To see this, recall that
$G= \mathbb{F}_{p^2}^{\,*}\semid G_0$ where $G_0=\mathrm{Gal}(\mathbb{F}_{p^2}/
\mathbb{F}_p)$ and $G_0$ acts on $\mathbb{F}_{p^2}^{\,*}$ using the natural action of 
$G_0$ on $\mathbb{F}_{p^2}$. The $\mathbb{F}_pG$-module $V$ is defined to be
$V=\mathbb{F}_{p^2}$ with the natural actions of  $G_0$ and $\mathbb{F}_{p^2}^{\,*}$. 
By (\ref{eq:v'}), we have
$$\End_{\mathbb{F}_p}(V)\cong V'\oplus \mathbb{F}_pG_0$$
for a simple projective $\mathbb{F}_pG$-module $V'$. Letting $K=V'$, we then have
$\Gamma=K \semid_{\delta} \,G$ where 
$\delta:G \to \mathrm{Aut}(K)$ is given by the $G$-action on $K=V'$.
Let $\Gamma_0= K \semid_{\delta} \,\mathbb{F}_{p^2}^{\,*}$. 
Using that $\mathbb{F}_pG_0\cong \mathrm{Ind}_{\Gamma_0}^\Gamma 
\mathbb{F}_p$ where $\mathbb{F}_p$ is the trivial simple $\mathbb{F}_p\Gamma_0$-module, 
we obtain
$$\mathrm{End}_{\mathbb{F}_p}(V)\cong V' \oplus \mathrm{Ind}_{\Gamma_0}^\Gamma 
\mathbb{F}_p.$$
By Frobenius reciprocity, we have
$$H^3(\Gamma,\mathrm{Ind}_{\Gamma_0}^\Gamma \mathbb{F}_p)\cong 
H^3(\Gamma_0, \mathbb{F}_p).$$
Using the Kummer sequence $1\to \mu_p\to \mathbb{C}^*\xrightarrow{p}\mathbb{C}^*\to 1$
where $\mu_p$ is the subgroup of  $p^{\mathrm{th}}$ roots of unity in $\mathbb{C}^*$,
we see that the quotient  group $H^2(\Gamma_0,\mathbb{C}^*)/p\cdot 
H^2(\Gamma_0,\mathbb{C}^*)$ is isomorphic to a subgroup of $H^3(\Gamma_0, \mathbb{F}_p)$. 

If $p=2$ then $\Gamma=S_4$ and $\Gamma_0=A_4$, and we have
$H^2(A_4, \mathbb{C}^*)
\cong \mathbb{Z}/2$, which implies $H^3(\Gamma,\mathrm{End}_{\mathbb{F}_p}(V))\neq 0$
when $p=2$.

Suppose now that $p\ge 3$. Since
$H^1(\Gamma_0,\mathbb{C}^*)=\mathrm{Hom}(\Gamma_0,\mathbb{C}^*)$
and the maximal abelian quotient of $\Gamma_0$ is isomorphic to $\mathbb{F}_{p^2}^{\,*}$, 
it follows that $H^2(\Gamma_0,\mathbb{F}_p)$
injects into $H^2(\Gamma_0,\mathbb{C}^*)$. Therefore, to show that 
$H^3(\Gamma,\mathrm{End}_{\mathbb{F}_p}(V))\neq 0$, it suffices to prove that
$H^2(\Gamma_0,\mathbb{F}_p)\neq 0$. Using the short exact
sequence
$$1\to K \to \Gamma_0\to \mathbb{F}_{p^2}^{\,*}\to 1$$
and that $H^i(K,\mathbb{F}_p)$ is cohomologically trivial as a module for $\mathbb{F}_{p^2}^{\,*}$
for all $i\ge 0$,
we see that
$H^2(\Gamma_0,\mathbb{F}_p)\cong H^0( \mathbb{F}_{p^2}^{\,*},H^2(K,\mathbb{F}_p))$.
Consider the function $c:K\times K\to \bigwedge^2 K$ given by the wedge product.
It follows from the bilinearity of the wedge product that $c$ is a 2-cocycle. Since $p\ge 3$,
the anti-commutativity of the wedge product implies  that $c$ is not a 2-coboundary.
Furthermore, $c$ is an invariant 2-cocycle with respect to the action of $\mathbb{F}_{p^2}^{\,*}$
on $K\times K$ and on $\bigwedge^2 K$. Here $\bigwedge^2 K$ can be identified with
$\mathbb{F}_p$ with trivial action by $\mathbb{F}_{p^2}^{\,*}$ in view of (\ref{eq:numberthis2})
since $K\cong \mathbb{F}_{p^2}\sigma$ as an $\mathbb{F}_pG$-module.
It follows that $c$ defines a non-zero element in 
$H^0( \mathbb{F}_{p^2}^{\,*},H^2(K,\mathbb{F}_p))$.
Therefore, we obtain $H^3(\Gamma,\mathrm{End}_{\mathbb{F}_p}(V))\neq 0$
for all primes $p$.
\end{rem}

\begin{rem}
\label{rem:moregeneral}
To construct more examples to which Theorem \ref{thm:genresult} applies, there 
are two fundamental issues.  One must construct a group $G$ and a projective
$kG$-module $V$ for which both the left $kG$-module structure and the ring structure 
of $M = \mathrm{Hom}_k(V,V)$
can be analyzed sufficiently well to be able to produce a 
$G$-module $K$ having the properties in the Theorem. When one can identify
the ring $\mathrm{Hom}_k(V,V)$ with a twisted group algebra, as in the proof of
Theorem \ref{thm:twisted}, this can be very useful in checking condition
(ii)(b) of Theorem \ref{thm:genresult}.  A natural approach to analyzing
the $kG$-module structure of $M$ is to note that the Brauer character $\xi_M$
of $M$ is the tensor product $\xi_V \otimes \xi_{V^*}$ of the Brauer characters
of $V$ and its $k$-dual $V^*$.   For example, if $V$ is induced from a 
representation $X$ of a subgroup $H$ of $G$, then  $\xi_V$ is given by 
the usual formula for the character  of an induced representation.  If $\mathrm{dim}_k(X) = 1$,
the analysis
of the ring structure of $M$ becomes a combinatorial problem using $X$ and coset representatives
of $H$ in $G$.  
We carry out this program with some further examples in the next section.
\end{rem}

\section{Further examples}
\label{s:examples}

In all the examples for Theorem \ref{thm:main1} which were discussed in \S \ref{s:inverse}, 
the dimension of $V$ is 2. 
In this section, we weaken this restriction on the dimension.
Let $W$ be a complete Noetherian local commutative ring with residue field $k$ of
positive characteristic $p$. We prove the following result.

\begin{thm}
\label{thm:alldegrees}
Suppose $d\ge 2$ is an integer such that either $d<p-1$ or $d=p^f$ for some positive integer $f$.
There exists a profinite group $\Gamma$ 
satisfying Hypothesis \ref{hyp:finite}  and a continuous representation $V$ of $\Gamma$ over $k$
of degree $d$ such that the versal deformation ring $R_W(\Gamma,V)$ is isomorphic to
$W[[t]]/(t^2,pt)$.
\end{thm}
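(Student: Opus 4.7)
The plan is to generalize Theorem \ref{thm:twisted} from $d=2$ to arbitrary $d$ satisfying the stated hypothesis, and then invoke Theorems \ref{thm:basechange} and \ref{thm:genresult}. By Theorem \ref{thm:basechange}, it suffices to produce the desired data over $W_0 = \Z_p$ with residue field $\F_p$: given a finite group $\Gamma$ and a representation $V_0$ of degree $d$ with $\End_{\F_p\Gamma}(V_0)=\F_p$ and $R_{W_0}(\Gamma,V_0)\cong \Z_p[[t]]/(pt,t^2)$, base-changing to $k$ gives a representation $V=k\otimes_{\F_p}V_0$ with $\End_{k\Gamma}(V)=k$ and $R_W(\Gamma,V)\cong W[[t]]/(pt,t^2)$.

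Set $G_0=\mathrm{Gal}(\F_{p^d}/\F_p)$, $G=\F_{p^d}^{\,*}\semid G_0$, and let $V_0=\F_{p^d}$ carry the natural $G$-action as in Theorem \ref{thm:twisted}. Then $V_0$ is a simple $\F_p G$-module of degree $d$ with $\End_{\F_p G}(V_0)=\F_p$, because any $G$-linear endomorphism commutes with $\F_{p^d}^{\,*}$-multiplication hence lies in $\F_{p^d}$, and further commutes with $G_0$ hence lies in $\F_p$. Projectivity of $V_0$ splits into cases: if $d<p-1$ then $|G|=d(p^d-1)$ is coprime to $p$; if $d=p^f$ then $G_0$ is a Sylow $p$-subgroup of $G$ and $V_0|_{G_0}\cong\F_p G_0$ by the normal basis theorem, so Higman's criterion applies. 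As in the proof of Theorem \ref{thm:twisted}, $M=\End_{\F_p}(V_0)\cong \F_{p^d}^{\,\dagger}G_0$ as $\F_p$-algebras, and as an $\F_p G$-module $M=\bigoplus_{i=0}^{d-1}V_{0,i}$ with $V_{0,i}=\F_{p^d}\sigma^i$, where $G_0$ acts by Frobenius on coefficients and $\zeta\in\F_{p^d}^{\,*}$ acts on $V_{0,i}$ by multiplication by $\zeta^{1-p^i}$.

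The crucial step is to show that $V':=V_{0,1}$ is an absolutely simple $\F_p G$-module appearing in $M$ with multiplicity one, and that the image of the corresponding map $K\to M$ is non-commutative. Absolute simplicity follows from the fact that the multiplicative order of $p$ modulo $(p^d-1)/(p-1)$ is exactly $d$, so the $G_0$-orbit of the character $\zeta\mapsto\zeta^{1-p}$ of $\F_{p^d}^{\,*}$ has size $d$, making $V'$ simple over $\F_p[\F_{p^d}^{\,*}]$ with endomorphism ring $\F_{p^d}$, whose $G_0$-fixed part is $\F_p$. Multiplicity one reduces to the congruence $1-p\not\equiv p^k(1-p^j)\pmod{p^d-1}$ for $0\le k\le d-1$ and $j\not\equiv 1\pmod d$, a short computation using base-$p$ expansions. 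Take $K=V'$ as an $\F_p G$-module, $\Gamma=K\semid G$, and let $\alpha\colon K\hookrightarrow M_W/pM_W=M$ be the canonical injection (note $n=1$, so $M_W/p^nM_W=M$).

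To finish, condition (ii)(a) of Theorem \ref{thm:genresult} is precisely the multiplicity-one assertion. For condition (ii)(b) with $n=1$, a direct calculation in $\F_{p^d}^{\,\dagger}G_0$ gives $(\alpha\sigma)(\beta\sigma)=\alpha\beta^p\sigma^2$ and $(\beta\sigma)(\alpha\sigma)=\beta\alpha^p\sigma^2$, which differ whenever $\alpha\beta^p\ne\beta\alpha^p$, and such pairs $\alpha,\beta\in\F_{p^d}$ exist for any $d\ge 2$. The main obstacle is the multiplicity-one claim for $V'$ inside $M$, reducing to the orbit calculation above. The case $d=p^f$ adds one subtlety: $M$ is no longer semisimple because $V_{0,0}\cong\F_p G_0$ has all composition factors trivial; however, each $V_{0,i}$ with $i\neq 0$ carries a non-trivial $\F_{p^d}^{\,*}$-action and therefore cannot occur as any composition factor of $V_{0,0}$, so the multiplicity comparison for $V'$ reduces to the same orbit calculation as in the coprime case.
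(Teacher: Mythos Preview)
Your proof is correct and establishes the theorem, but by a genuinely different route from the paper's.  The paper takes $G=S_{d+1}$ when $d<p-1$ and $G=\PGL_2(\F_q)$ when $d=q=p^f$, with $V$ the standard $d$-dimensional complement in the permutation module on $d+1$ points and $K=V$; multiplicity one of $V$ in $M=\End_{\F_p}(V)$ is obtained by counting $G$-orbits on ordered triples via triple transitivity, and the embedding $V\hookrightarrow M$ and its non-commutativity are exhibited with explicit matrices.  You instead generalize Theorem~\ref{thm:twisted} directly, taking $G=\F_{p^d}^{\,*}\rtimes\mathrm{Gal}(\F_{p^d}/\F_p)$, $V_0=\F_{p^d}$, and $K=V'=\F_{p^d}\sigma$ inside the twisted group ring $M\cong\F_{p^d}^{\,\dagger}G_0$; the twisted-algebra structure makes both the orbit computation for multiplicity one and the non-commutativity check transparent.

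Your approach in fact yields more than stated: the hypothesis $d<p-1$ or $d=p^f$ enters only in your projectivity argument, but $V_0|_{G_0}\cong\F_pG_0$ holds for every $d$ by the normal basis theorem, and restricting further to the Sylow $p$-subgroup of $G_0$ (which is a Sylow $p$-subgroup of $G$, since $p\nmid p^d-1$) this remains free.  Hence $V_0$ is projective for all $d\ge 2$, and the rest of your argument (the order of $p$ modulo $(p^d-1)/(p-1)$, the base-$p$ congruence, and the commutator in $\F_{p^d}^{\,\dagger}G_0$) goes through unchanged.  The paper's argument, by contrast, genuinely requires the dichotomy, since it is the triple transitivity of $S_{d+1}$ and $\PGL_2(\F_q)$ that drives the orbit count.
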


\begin{proof}
Because of Theorem \ref{thm:basechange} it suffices to prove Theorem \ref{thm:alldegrees} 
when $k=\mathbb{F}_p$ and  $W=\Witt(\mathbb{Z}_p)$, which allows us to use Theorem 
\ref{thm:genresult}.

Let $\Omega=\{1,2,\ldots,d+1\}$, and let $S_{d+1}$ be the symmetric group on $\Omega$.
For each prime power $q=p^f$, the projective general linear group $\PGL_2(\mathbb{F}_q)$
acts faithfully and triply transitively on the projective line
$\mathbb{P}^1({\mathbb{F}_q})$ and is thus
isomorphic to a subgroup of $S_{q+1}$.
If $d<p-1$ let $G=S_{d+1}$, and if $d=q=p^f\ge 2$ let $G=\PGL_2(\mathbb{F}_q)$
viewed as a subgroup of $S_{d+1}=S_{q+1}$.

Consider the natural $(d+1)$-dimensional 
representation $N$ of $S_{d+1}$ with $\mathbb{F}_p$-basis 
$\{b_1,\ldots, b_{d+1}\}$ such that for all $\sigma\in S_{d+1}$ and all $i\in\Omega$, 
$\sigma . b_i = b_{\sigma(i)}$. On restricting to $G$, $N$ is also a $(d+1)$-dimensional
representation of $G$ over $\mathbb{F}_p$. Let $T$ be the 1-dimensional 
$\mathbb{F}_p$-subspace of $N$ generated by $(b_1+\cdots + b_{d+1})$, and let
$V$ be the $d$-dimensional $\mathbb{F}_p$-subspace of $N$ with $\mathbb{F}_p$-basis
$$\{b_1-b_2,b_2-b_3,\ldots,b_d-b_{d+1}\}.$$
Then $T$ and $V$ are $\mathbb{F}_pG$-submodules of $N$, where $G$ acts trivially on $T$.
In fact, since $p\nmid (d+1)$ we have $N=T\oplus V$. Since $T$ and $V$ can be lifted to
$\mathbb{Z}_pG$-modules $\hat{T}$ and $\hat{V}$ which are free over $\mathbb{Z}_p$
and since $G$ acts doubly transitively on $\Omega$, it follows that
$\mathbb{Q}_p\otimes_{\mathbb{Z}_p}\hat{V}$ is an absolutely irreducible representation
of $G$ over $\mathbb{Q}_p$ (see e.g. \cite[Ex. 9 on p. 877]{dummitfoote}). 
If $d<p-1$ then the order of $G=S_{d+1}$ is relatively prime to $p$
which immediately implies that $V$ is a simple and projective $\mathbb{F}_pG$-module.
If $d=q$ and $G=\mathrm{PGL}_2(\mathbb{F}_q)$ then the Sylow $p$-subgroups of
$G$ have order $q$ which implies by \cite[Prop. 46 on p. 136]{SerreRep} that 
$V=\hat{V}/p\hat{V}$ is a simple and projective $\mathbb{F}_pG$-module.
Note that in this case, the character of $G$ corresponding to $\hat{V}$ is the Steinberg
character of $\mathrm{PGL}_2(\mathbb{F}_q)$.

Let $K=V$ and let $\delta:G\to\mathrm{Aut}(K)$ be the group homomorphism given by the 
action of $G$ on $K=V$. Define $\Gamma = K\semid_\delta G$ and view $V$ also as
an $\mathbb{F}_p\Gamma$-module via inflation.
Let $M=\mathrm{End}_{\mathbb{F}_p}(V)$. 
To prove Theorem \ref{thm:alldegrees}, it suffices to prove
that $G$, $K$ and $M$ satisfy the conditions in Theorem \ref{thm:genresult}(ii) when 
$k=\mathbb{F}_p$ and $n=1$.

For an $\mathbb{F}_pG$-module $X$, let $X^*$ denote its $\mathbb{F}_p$-dual.
Consider 
$$\mathrm{End}_{\mathbb{F}_p}(N)\cong N^*\otimes_{\mathbb{F}_p} N\cong 
N\otimes_{\mathbb{F}_p} N.$$
We can identify $\mathrm{End}_{\mathbb{F}_p}(N)$ with 
$\mathrm{Mat}_{d+1}({\mathbb{F}_p})$, where we identify $b_i\otimes b_j$ with the
elementary matrix $E_{i,j}$ which has coefficient 1 at position $(i,j)$ and coefficient 0 otherwise.
Since $M=\mathrm{End}_{\mathbb{F}_p}(V)\cong V^*\otimes_{\mathbb{F}_p}V
\cong V\otimes_{\mathbb{F}_p} V$,
$M$ can be identified with the subspace of $\mathrm{End}_{\mathbb{F}_p}(N)$ with 
${\mathbb{F}_p}$-basis $\{D_{i,j}\;|\; 1\le i,j\le d\}$, where
$$D_{i,j} = E_{i,j}-E_{i,j+1}-E_{i+1,j}+E_{i+1,j+1}$$
for all $1\le i,j\le d$. 
Note that $\mathrm{End}_{\mathbb{F}_p}(N)$ and $M$ are 
${\mathbb{F}_p}S_{d+1}$-modules by letting $S_{d+1}$ act by conjugation, i.e. 
$\sigma. E_{i,j} = E_{\sigma(i),\sigma(j)}$ for all $\sigma\in S_{d+1}$. In particular, it follows
that $\mathrm{End}_{\mathbb{F}_p}(N)$ and $M$ are ${\mathbb{F}_p}G$-modules.

We now show that the irreducible representation $V$ occurs with multiplicity $1$ in~$M$.
For ${\mathbb{F}_p}G$-modules $X$ and $Y$, let 
$$\langle X,Y\rangle=\mathrm{dim}_{\mathbb{F}_p}\mathrm{Hom}_{\mathbb{F}_pG}(X,Y)
=\mathrm{dim}_{\mathbb{F}_p}\left(X^*\otimes_{\mathbb{F}_p} Y\right)^G.$$
Since $V$ is a simple projective $\mathbb{F}_pG$-module which is isomorphic to
its $\mathbb{F}_p$-dual $V^*$, it follows that
the multiplicity of $V$ in $M$ is equal to 
$$\langle V,M\rangle=\mathrm{dim}_{\mathbb{F}_p}\left(
V\otimes_{\mathbb{F}_p}V\otimes_{\mathbb{F}_p}V\right)^G.$$
Using that $N=T\oplus V$, we obtain that 
$$N\otimes_{\mathbb{F}_p}N\otimes_{\mathbb{F}_p}N
\cong T \;\oplus\;  V^3 \; \oplus \; (V\otimes_{\mathbb{F}_p}V)^3\;\oplus\;
(V\otimes_{\mathbb{F}_p}V\otimes_{\mathbb{F}_p}V).$$
Since $V^G=0$ and $\mathrm{dim}_{\mathbb{F}_p}\left(V\otimes_{\mathbb{F}_p}V\right)^G
=\langle V, V\rangle = 1$, it follows that
\begin{equation}
\label{eq:vmult}
\langle V,M\rangle = \mathrm{dim}_{\mathbb{F}_p}\left(
N\otimes_{\mathbb{F}_p}N\otimes_{\mathbb{F}_p}N\right)^G - 4.
\end{equation}
Note that $N\otimes_{\mathbb{F}_p}N\otimes_{\mathbb{F}_p}N$ is the representation of $G$
corresponding to the diagonal action of $G$ on the set $\Omega$ of
all triples $(a,b,c)$ with $a,b,c\in\{1,\ldots,d+1\}$.
This implies that $\mathrm{dim}_{\mathbb{F}_p}\left(
N\otimes_{\mathbb{F}_p}N\otimes_{\mathbb{F}_p}N\right)^G$ is equal to the number of
$G$-orbits on $\Omega$. Since $G$ acts triply transitively on $\Omega$, this number 
is equal to 5. By (\ref{eq:vmult}), it follows that
$\langle V,M \rangle = 1$, i.e. $V$ occurs with multiplicity 1 in $M$. Therefore, 
condition (ii)(a) of Theorem \ref{thm:genresult} is satisfied for $K=V$ when $n=1$.

Let $\rho$ be the $(d+1)$-cycle $\rho=(1,2,\cdots,d+1)\in S_{d+1}$, and define
$$x_1=\left(\begin{array}{ccrcr}
d-1&0&-1&\cdots&-1\\ 0&-d+1\;\;&1&\cdots&1\\
-1\;\,\,&1&0&\cdots&0\\ \vdots&\vdots&\vdots&&\vdots\\ -1\;\,\,&1&0&\cdots&0
\end{array}\right).$$
Since $x_1=(d-1)D_{1,1} + \sum_{\ell =2}^{d} (d+1-\ell)\,(D_{1,\ell}+D_{\ell,1})$,
we see that $x_1\in M$.
Define
$$x_j=\rho^{j-1}. x_1$$
for $2\le j\le d$. An easy matrix calculation shows that the subspace of $M$ with 
$\mathbb{F}_p$-basis $\{x_1,\ldots, x_d\}$ is an ${\mathbb{F}_p}S_{d+1}$-submodule 
of $M$ which is isomorphic to $V$. 
On restricting the action to $G$, we see that the additive group homomorphism
$\alpha:K=V\to M$, defined by $\alpha(b_j-b_{j+1}) = x_j$ for $1\le j\le d$, is an injective 
$G$-equivariant homomorphism.
Since $x_1x_2\neq x_2x_1$ in $M\subset\mathrm{Mat}_d(\mathbb{F}_p)$, 
condition (ii)(b) of Theorem \ref{thm:genresult} is satisfied when $n=1$. 
This completes the proof of Theorem \ref{thm:alldegrees}.
\end{proof}

\section{Appendix: Proof of Theorem $\ref{thm:basechange}$}
\label{s:appendix}

We assume the notation in the statement of Theorem \ref{thm:basechange}. 
Let $R=R_{W}(\Gamma,\rho)$.
Recall that $\Omega=W'\otimes_{W}R$ and $R'$ is the completion
of $\Omega$ with respect to its unique maximal ideal $\mathfrak{m}_\Omega$.
Define $\hat{\mathcal{C}}'$ to be the category of all 
complete local commutative Noetherian $W'$-algebras with residue field $k'$.
Let $\nu:\Gamma\to\mathrm{GL}_d(R)$ be a versal lift of $\rho$ over $R$,
and let $$\nu':\Gamma\to\mathrm{GL}_d(R')$$ be the lift of $\rho'$ over $R'$ defined by
$$\nu'(g)=\left(1\otimes\nu(g)_{i,j}\right)_{1\le i,j\le d}$$ for all $g\in \Gamma$.

The first step is to show that if $A' \in \mathrm{Ob}(\mathcal{C}')$ is an
Artinian $W'$-algebra with residue field $k'$ and
$\tau':\Gamma\to\mathrm{GL}_d(A')$ is a lift of $\rho'$ over $A'$, then there is a morphism 
$\alpha:R'  \to A'$ in $\hat{\mathcal{C}}'$ such that $$[\tau']=[\alpha\circ\nu'].$$
Since $A'$ is Artinian,
$\mathrm{Hom}_{\hat{\mathcal{C}}'}(R',A')$ is equal to
the space $\mathrm{Hom}_{\mathrm{cont}}(\Omega,A')$ of continuous $W'$-algebra
homomorphisms which induce the identity map on the residue field $k'$.
Because of Hypothesis \ref{hyp:finite}, one can find a finite set $S\subseteq\Gamma$ such that
$\tau'(S)$ is a set of topological generators for the image of $\tau'$.
Since $\rho'$ and $\rho$ have the same image in $\mathrm{GL}_d(k)\subset\mathrm{GL}_d(k')$,
there exists for each $g\in S$ a matrix $t(g) \in \mathrm{Mat}_d(W)$  such that all entries of
the matrix $\tau'(g) - t(g)$ lie in the maximal ideal $\mathfrak{m}_{A'}$ of $A'$. 
Let $T\subseteq \mathfrak{m}_{A'}$ be the finite set of all matrix entries of $\tau'(g) - t(g)$ as 
$g$  ranges over $S$. Then there is a continuous homomorphism $$f:W[[x_1,\ldots,x_m]] \to A'$$
with $m = \# T$ and $\{f(x_i)\}_{i = 1}^m = T$.   Since $A'$ has the discrete topology, the image
$B$ of $f$ must be a local Artinian $W$-algebra with residue field $k$.
Since $\tau'(S)$ is a set of topological generators for the image of $\tau'$, it follows that
$\tau'$ defines a lift of $\rho$ over $B$. Because   $\nu:\Gamma\to\mathrm{GL}_d(R)$ 
is a versal lift of $\rho$ over the versal deformation ring $R = R_{W}(\Gamma,\rho)$ of $\rho$,
there is a  morphism 
$\beta:R  \to B$ in $\hat{\mathcal{C}}$ such that $\tau':\Gamma \to \mathrm{GL}_d(B)$
is conjugate to $\beta \circ \nu$ by a matrix in the kernel of
$$\pi_B:\mathrm{GL}_d(B) \to \mathrm{GL}_d(B/\mathfrak{m}_B) = \mathrm{GL}_d(k).$$
Let $\beta':R \to A'$ be the composition of $\beta$ with the inclusion $B \subset A'$.
Define $\alpha: R'  \to A'$
to be the morphism in $\hat{\mathcal{C}}'$ corresponding 
to the continuous $W'$-algebra homomorphism 
$\Omega=W' \otimes_{W} R  \to A'$ which sends $w'\otimes r$ to $w'\cdot\beta'(r)$
for all $w'\in W'$ and $r\in R$.
It follows that $\alpha$ satisfies $[\tau']=[\alpha\circ\nu']$.

The second step is to show that  when $k'[\epsilon]$ is the ring of dual numbers over
$k'$, then $\mathrm{Hom}_{\hat{\mathcal{C}}'}(R',k'[\epsilon])$
is canonically identified with the set $\mathrm{Def}_{\rho'}(k'[\epsilon])$ of deformations of $\rho'$ over
$k'[\epsilon]$.  Since $k'[\epsilon]$ is Artinian, it suffices to show that 
$\mathrm{Hom}_{\mathrm{cont}}(\Omega,k'[\epsilon])$ is 
identified with $\mathrm{Def}_{\rho'}(k'[\epsilon])$.  Let 
\begin{eqnarray*}
T(W',\Omega) &=& \frac{\mathfrak{m}_{\Omega}}{\mathfrak{m}_{\Omega}^2 + \Omega \cdot \mathfrak{m}_{W'}}
\quad\mbox{ and}\\
T(W,R) &=& \frac{\mathfrak{m}_R}{\mathfrak{m}_R^2 + R \cdot \mathfrak{m}_{W}}
\end{eqnarray*}
so that we have natural isomorphisms 
\begin{eqnarray*}
\mathrm{Hom}_{\mathrm{cont}}(\Omega,k'[\epsilon]) &\cong& 
\mathrm{Hom}_{k'}(T(W',\Omega),k')\quad\mbox{ and}\\
\mathrm{Hom}_{\hat{\mathcal{C}}}(R,k[\epsilon]) &\cong& \mathrm{Hom}_{k}(T(W,R),k).
\end{eqnarray*}
Since $\mathrm{Ad}(\rho') = k' \otimes_k \mathrm{Ad}(\rho)$, we have from \cite[Prop. 21.1]{maz2} 
that there are natural isomorphisms
\begin{eqnarray*}
\mathrm{Def}_{\rho'}(k'[\epsilon]) &=& H^1(\Gamma,\mathrm{Ad}(\rho')) \\
&=& k' \otimes_k H^1(\Gamma,\mathrm{Ad}(\rho))\\
&=&k'\otimes_k \mathrm{Def}_{\rho}(k[\epsilon]).
\end{eqnarray*}
Hence it suffices to show that the natural homomorphism 
$$\mu:k' \otimes_k  T(W,R) \to T(W',\Omega)$$ is an isomorphism of $k'$-vector spaces.  
Since $\mathfrak{m}_{W}$ is finitely generated, one can reduce to the case when
$W = k$, by considering generators $\alpha$ of $\mathfrak{m}_W$ and 
successively replacing 
$W$ by $W/(W\alpha)$ and $R$ by  $R/(R\alpha)$.
One then  divides $W'$ and $\Omega$ further by ideals generated by generators 
for $\mathfrak{m}_{W'}$ to be able to assume that $W' = k'$.  
However, the case when $W = k$ and $W' = k'$ is obvious, since then
\begin{eqnarray*}
T(k',\Omega)& =& \mathfrak{m}_{\Omega}/\mathfrak{m}_{\Omega}^2 \\
&\cong &k' \otimes_k 
\left(\mathfrak{m}_{R}/\mathfrak{m}_{R}^2\right)\\
&=&  k' \otimes T(k,R).
\end{eqnarray*}
This completes the proof of Theorem \ref{thm:basechange}.

\end{document}